\documentclass[12pt]{article}
\usepackage{graphicx}
\usepackage{color}
 \definecolor{rouge}{cmyk}{0,1,1,0.4}
\usepackage{amssymb,amsmath,amsfonts,amsthm}

\oddsidemargin 0cm \evensidemargin -1cm \topmargin -1cm
\textheight 22.5cm \textwidth 17cm

\newtheorem{theorem}{Theorem}[section]

\newtheorem{theo}[theorem]{Theorem}

\newtheorem {hypo}[theorem]{Hypothesis}

\newtheorem {pro}[theorem]{Proposition}
\newtheorem{defi}[theorem]{Definition}

\newtheorem{lem}[theorem]{Lemma}

\newdimen\AAdi%
\newbox\AAbo%
\def\bsi{\overline{\sigma}}
\def\hsi{\widehat{\sigma}}
\def\bet{\overline{\eta}}
\def\bx{{\overline{x}}}
\def\AArm{\fam0 }
\def\AAk#1#2{\setbox\AAbo=\hbox{#2}\AAdi=\wd\AAbo\kern#1\AAdi{}}%
\def\AAr#1#2#3{\setbox\AAbo=\hbox{#2}\AAdi=\ht\AAbo\raise#1\AAdi\hbox{#3}}%
\def\cC{{\mathcal C}}%
\def\BBp{{\AArm I\!P}}%
\def\BBr{{\AArm I\!R}}%
%
%
%
%
%
%
%
%
\def\BBone{{\AArm 1\AAk{-.8}{I}I}}%

\usepackage[ansinew]{inputenc}

\begin{document}
\title{Dynamical coupling between Ising and FK percolation}
\author{
\qquad Rapha\"el Cerf\footnote{
\noindent
DMA, Ecole Normale Sup\'erieure,
CNRS, PSL Research University, 75005 Paris.}
\footnote{
\noindent Laboratoire de Math\'ematiques d'Orsay, Universit\'e Paris-Sud, CNRS, Universit\'e
Paris--Saclay, 91405 Orsay.}
\hskip 60pt
Sana Louhichi \footnote{Laboratoire Jean Kuntzmann.
B\^atiment IMAG - 700 avenue centrale - 38400 Saint Martin d'H\`eres, France.
}
}
\maketitle
\abstract{
We investigate the problem of constructing a dynamics on edge--spin configurations
which realizes a coupling between a Glauber dynamics of the Ising model and a
dynamical evolution of the percolation configurations.
We dream of constructing a Markov process on edge--spin configurations which is reversible with
respect to the Ising--FK coupling measure, and such that the marginal on the spins is a 
Glauber dynamics, while the marginal on the edges is a Markovian evolution.
We present two local dynamics, one which fulfills only the first condition and one
which fulfills the first two conditions.
We show next that our dream process is not feasible in general.
We present a third dynamics, which is non local and fulfills the first and the third conditions.
We finally present a localized version of this third dynamics, which 
can be seen as a contraction of the first dynamics.
}
\section{Introduction}
The Ising model, invented almost a century ago, is one of the most studied model of statistical mechanics.
A fundamental tool to analyze the Ising model is the random cluster model, or FK model, invented by Fortuin and Kasteleyn around 1969 (see the reference book \cite{Grimmett}).
Results on this
dependent percolation model can be transferred towards the Ising model via a coupling construction due to
Edwards and Sokal \cite{Sokal}.
This machinery works extremely well, but so far, it has essentially been employed to study the systems
at equilibrium.
Yet another rich facet of the Ising model is the dynamics. There exist several microscopic dynamics
on spin configurations, which give rise to the so--called stochastic Ising model, and
whose equilibrium are described by the Ising Gibbs measure:
the Metropolis dynamics, 
the heat--bath dynamics, the Kawasaki dynamics, to name a few of them.
On one hand, these dynamics provide a basic model
to study fundamental questions on dynamics, for instance to model the metastability phenomenon \cite{OS}.
On the other hand, there is a hope that the understanding of the dynamics will shed light on
the equilibrium measure.

Percolation models possess also a natural dynamics. For the Bernoulli percolation model, it consists
in updating independently the edges, and it leads to beautiful difficult problems \cite{Olle,Steif}.
For the FK model, the construction of a dynamics is more subtle and it involves typically non--local
computations \cite{SCM}.
One naturally wonders whether the stochastic Ising model and dynamical percolation could help to
understand each other.
Our note is a little investigation into the possibility of building a coupling for the dynamics
on Ising and percolation models, which could help to understand the dynamics of the Ising model.
We focus here on the case of a Glauber type dynamics, that is a local dynamics which modifies at 
most one spin at a time. The precise definition of such dynamics, together with several examples,
are given in section~\ref{glauber}.
We dream of building a coupling  dynamics on edge--spins configurations such that:
\smallskip

\noindent
$\bullet$ The marginal on the spins is a Glauber dynamics.

\noindent
$\bullet$ The marginal on the edges is a simple Markovian evolution.

\noindent
$\bullet$ The coupling dynamics on edge--spins configurations is reversible with respect to the
coupling measure between the Ising and the FK models.
\smallskip

\noindent
Notice that if we drop the first constraint, then 
we could simply 
consider a time continuous Markov process on the edge--spins configurations which, after
an exponential time of parameter one, jumps on a new independent edge--spins configuration,
drawn according to the Ising--FK coupling measure. However the first constraint is essential
for us, indeed our hope is to build a dynamical coupling which would help to study
a Glauber dynamics of the Ising model.

The definition of the Ising--FK coupling measure is recalled in section~\ref{IFK}.
The stochastic Ising model is defined in section~\ref{glauber}. The FK dynamics is defined in
section~\ref{fkdyna}.
The above conditions are precisely stated in section~\ref{goal}.
Unfortunately, we did not succeed in building a dynamics which fulfill these three conditions,
even if we weaken partially the last one.
However, we manage to build couplings which satisfy the third condition. The most basic coupling is a dynamics which changes at most one object
at each step, i.e., either one spin or one edge is modified at a time. We prove that, for this type of coupling,
the first condition can never be satisfied.
We build such a coupling
dynamics in section~\ref{one}.
Another natural dynamics consists in changing simultaneously the spin at one vertex,
together with the edges incident to this vertex.
Such a dynamics is presented in section~\ref{two}.
The good news is that the
marginal of this dynamics on the spins is a Glauber dynamics.
However the marginal of this type of dynamics on the edges is not Markovian.
%
%
In section~\ref{general},
we show that our dream process is not feasible in general.
We present a third dynamics in section~\ref{three}, 
which is non local and fulfills the first and the third conditions.
We finally present in section~\ref{four} a localized version of this third dynamics, which 
can be seen as a contraction of the first dynamics.
\section{Ising--FK coupling measure}
\label{IFK}
In this section, we recall some classical notation and we define the Ising--FK coupling measure~$IP$.
We consider a finite graph $(V,E)$, where $V$ is the set of the vertices and $E$ the set of the edges.
The edges are unoriented, and $E$ is a subset of the set of pairs of points of $V$.
Throughout the paper, we suppose that the set of edges $E$ is not empty.
An edge configuration $\eta$ is an element of
$\{0,1\}^E$ (where $0$ stands for closed and $1$ for open).
A spin configuration $\sigma$ is an element of
$\{-1,1\}^V$.
An edge $e$ with endvertices $x$ and $y$
is written
as $e=\langle x,y\rangle$
or $e=\langle y,x\rangle$.
%
We define
$$\forall\sigma\in \{-1,1\}^V
\quad\forall e=\langle x,y\rangle\in E\qquad \delta_{\sigma}(e)=\BBone_{\sigma(x)=\sigma(y)} \,.$$
An edge configuration $\eta$ and a spin configuration $\sigma$ are said to be compatible if we have
$$\forall e\in E\qquad
\eta(e) \leq \delta_{\sigma}(e)
\,,$$
i.e., if the endvertices of any open edge in $\eta$ have the same spins in $\sigma$.
We denote by $\cC$ the set of the pairs of compatible configurations:
$$
\cC=\big\{(\eta,\sigma)\in \{0,1\}^E\times \{-1,1\}^V: \eta(e) \leq \delta_{\sigma}(e)\,\,\,\text{for any}\,\,\,
e\in E\big\}.
$$
Let $p$ be a fixed number in $[0,1]$.
The Ising-Percolation measure $IP$ is the probability measure  on the product space $\{0,1\}^E\times \{-1,+1\}^V$
defined as follows.
For any $(\eta,\sigma)$ in $\{0,1\}^E\times\{-1,+1\}^V$, we set
$$
{IP}(\eta,\sigma)= \frac{1}{Z} \prod_{e\in E}\left(p\BBone_{\eta(e)=1}\delta_{\sigma}(e)+ (1-p)\BBone_{\eta(e)=0} \right),
$$
where $Z$ is the partition function, i.e., the normalising constant that makes ${IP}$ a probability measure on $\{0,1\}^E\times\{-1,+1\}^V$,  given by
$$
Z=\sum_{(\eta,\sigma)\in \{0,1\}^E\times\{-1,+1\}^V}\prod_{e\in E}\left(p\BBone_{\eta(e)=1}\delta_{\sigma}(e)+ (1-p)\BBone_{\eta(e)=0} \right).
$$
The Ising-Percolation measure can also be written as
\begin{equation}\label{ipsimply}
{IP}(\eta,\sigma)= \frac{1}{Z} p^{|\{e\in E:\, {\eta(e)=1}\}|} (1-p)^{|\{e\in E:\, {\eta(e)=0}\}|}\BBone_{(\eta,\sigma)\in \cC},
\end{equation}
where the absolute value of a set denotes its cardinality. In fact,
the measure $IP$ is the Bernoulli product measure on edge-spin configurations with parameters $(p,1/2)$
conditioned to the set $\cC$ of the compatible configurations.
It is well--known that the probability measure ${IP}$ is a coupling of the Ising measure $\mu_{\beta}$
defined on $\{-1,+1\}^V$,
together with the random-cluster measure $\phi_{p,2}$ defined on $\{0,1\}^E$, with the relation $p=1-e^{-\beta}$. In fact, the first marginal measure $\phi_{p,2}$ on $\{0,1\}^E$ is given by
\begin{equation}\label{tramsf}
\forall\,\,\eta\in \{0,1\}^E\qquad \phi_{p,2}(\eta)=\sum_{\sigma \in \{-1,+1\}^V}{IP}(\eta,\sigma)= \frac{1}{Z_{RC}}
\left(\prod_{e\in E} p^{\eta(e)}(1-p)^{1-\eta(e)}\right)2^{k(\eta)}.
\end{equation}
Here $k(\eta)$  denotes the number of connected components (or open clusters) of the graph having for vertices $V$ and for edges the open edges in the configuration $\eta$
(recall that an edge $e$ is said to be
open in $\eta$ if $\eta(e)=1$ and closed if $\eta(e)=0$).  Naturally, $Z_{RC}$ is the partition function that makes $
\phi_{p,2}$ a probability measure on $\{0,1\}^E$. The second marginal measure
is given by
\begin{equation}\label{transf}
\forall\,\, \sigma\in \{-1,+1\}^V\qquad
\sum_{\eta \in \{0,1\}^E}{IP}(\eta,\sigma)=
\frac{1}{Z} (1-p)^{|\{\,e\in E:\delta_\sigma(e)=0\,\}|}\,.
\end{equation}
Notice that
$$|\{\,e\in E:\delta_\sigma(e)=0\,\}|\,=\,
\sum_{e\in E}\big(1-\delta_{\sigma}(e)\big)\,=\,
|E|-
\sum_{e\in E}\delta_{\sigma}(e)
\,.$$
Let $\beta>0$ be such that $p=1-e^{-\beta}$. Rewriting the formula~\eqref{transf} with these notations,
we see that the second marginal is
the measure $\mu_{\beta}$ on $\{-1,+1\}^V$
given by
\begin{equation}\label{mubeta}
\forall\,\, \sigma\in \{-1,+1\}^V\qquad \mu_{\beta}(\sigma)
\,=\,
\frac{1}{Z_{I}} \exp\left({\beta} \sum_{e\in E}\delta_{\sigma}(e)\right)  ,\,\,\,
\end{equation}
where  the partition function $Z_I$ is
$$Z_I\,=\,Z\,\exp(\beta|E|)
\,.$$
Let us rewrite the Hamiltonian in a more classical way. 
For an edge $e$ with endpoints $x,y$, we have
\begin{equation}\label{reta}
\delta_\sigma(e)\,=\,\frac{1}{2}\big(1+\sigma(x)\sigma(y)\big)\,.
\end{equation}
Summing over $e\in E$, we get
$$
\sum_{e\in E}\delta_{\sigma}(e)
\,=\,
\sum_{\genfrac{}{}{0pt}{1}{\scriptstyle x,y\in V}{\scriptstyle \{x,y\}\in E}}
\frac{1}{2}\big(1+\sigma(x)\sigma(y)\big)
\,=\,
\frac{1}{2}|E|+
\frac{1}{2}
\sum_{\genfrac{}{}{0pt}{1}{\scriptstyle x,y\in V}{\scriptstyle \{x,y\}\in E}}
\sigma(x)\sigma(y)
\,.$$
The standard Hamiltonian of the Ising model is defined as
\begin{equation}\label{hcbeta}
\forall \sigma\in \{-1,+1\}^V\qquad H(\sigma)\,=\,
-\frac{1}{2}
\sum_{\genfrac{}{}{0pt}{1}{\scriptstyle x,y\in V}{\scriptstyle \{x,y\}\in E}}
\sigma(x)\sigma(y)\,.
\end{equation}
In the end, we have
\begin{equation}\label{mucbeta}
\forall\,\, \sigma\in \{-1,+1\}^V\qquad \mu_{\beta}(\sigma)
\,=\,
	\frac{1}{Z_{\beta}} \exp\left(-{\beta} H(\sigma)\right)  ,\,\,\,
\end{equation}
where  the partition function $Z_\beta$ is given by
$$Z_\beta
\,=\,
\sum_{\sigma\in \{-1,+1\}^V} \exp\left(-{\beta} H(\sigma)\right)  
\,=\,Z_I\,\exp\Big(-\frac{1}{2}\beta|E|\Big)
\,.$$
We refer to the book of Grimmett \cite{Grimmett}
and the references therein for more details about the coupling measure $IP$ between
random-cluster and Ising measures, its history and usefulness.
We refer to the paper of Schonmann \cite{Sch} for a nice presentation of the Gibbs measure of 
the Ising model and the associated dynamics.
\section{The stochastic Ising model}
\label{glauber}
In this section, we define Glauber type dynamics for the Ising model. Each of these dynamics defines
a Markov process $(\sigma_t)_{t\geq 0}$ on the spin configurations which is called a
stochastic Ising model.
A Markov process $(\sigma_t)_{t\geq 0}$ on the spin configurations 
is classically defined through its infinitesimal generator $L$ (see \cite{liggett}).
This generator $L$ acts on  functions $f$ of the spin configuration and it is of the following form. For any function $f:\{-1,+1\}^V\rightarrow \BBr$, we have
$$
\forall\,\, \sigma\in \{-1,+1\}^V\qquad Lf(\sigma)=
\sum_{\sigma'\in\{-1,+1\}^V}c(\sigma,\sigma')(f(\sigma')-f(\sigma))\,.
$$
The quantity $c(\sigma,\sigma')$ is
the rate at which the configuration $\sigma$ is transformed into $\sigma'$
when the system is in the state $\sigma$.
We are usually interested in local dynamics which modify at most one spin at a time. 
We call this type of dynamics Glauber type dynamics.
For a configuration
$\sigma$ and a vertex $x$, we denote by
$\sigma^x$ the configuration obtained from $\sigma$ by flipping the spin at the site $x$.
So we require that $c(\sigma,\sigma')$ vanishes when $\sigma$ and $\sigma'$ differ in more than one spin
and
the quantity $c(\sigma,\sigma^x)$, sometimes denoted by $c(x,\sigma)$,
is
the rate at which the spin at the site $x$ flips
when the system is in the state $\sigma$.
The transition rates $c(\cdot,\cdot)$ are defined on $\{-1,1\}^V\times \{-1,1\}^V$ and 
have thus the following properties:
for any
$\sigma\in \{-1,+1\}^V$ and $x\in V$,
\begin{eqnarray}\label{tauxglauber}
&& {c}(\sigma,\sigma') = 0\,\,\,{\mbox{if}}\,\,\, |\{x\in V,\,\sigma(x)\neq \sigma'(x)\}|\geq 2, {\nonumber}\\
&& {c}(\sigma,\sigma^x)>0\,\,{\mbox{for any}}\,\, x\in V, \\
&& c(\sigma,\sigma)=-\sum_{y\in V}c(\sigma,\sigma^y). {\nonumber}
\end{eqnarray}
We say that the transition
rates $c(\cdot,\cdot)$
satisfy the detailed balance condition with respect to
the Ising measure $\mu_{\beta}$ if
\begin{equation}\label{balanceising}
\forall\,\sigma\in\{-1,1\}^V\quad \forall\,x\in V\qquad
  \mu_{\beta}(\sigma)c(\sigma,\sigma^x)=\mu_{\beta}(\sigma^x)c(\sigma^x,\sigma)\,.
\end{equation}
When this
detailed balance condition
holds, the associated dynamics
is reversible with respect to the Ising measure $\mu_{\beta}$.
Several rates satisfy the conditions~(\ref{tauxglauber}) and the detailed balance condition (\ref{balanceising}).
These dynamics and their fundamental properties are presented in 
the paper of Schonmann \cite{Sch}. In the same paper appears the graphical construction of these
dynamics, which provides a natural and intuitive picture of the associated Markov processes.
Let us present some classical choices. 
Recalling the definition of the Ising Hamiltonian~\eqref{hcbeta},
we define further, for
$\sigma\in \{-1,+1\}^V$ and $x\in V$,
\begin{equation}\label{delta}
	\Delta_xH(\sigma)\,=\, H(\sigma^x)- H(\sigma)\,.
\end{equation}
%
For dynamics which modify at most one spin at a time, we set
	$$c(x,\sigma)=
	c(\sigma,\sigma^x)\,.$$
Here are some possible choices for the rates:

\noindent
{\bf The Metropolis dynamics:}
$$c(x,\sigma)\,=\,
\exp\Big(-\beta\max\big(\Delta_xH(\sigma),0\big)\Big)\,.$$

\noindent
{\bf The Heat bath dynamics:}
$$c(x,\sigma)\,=\,
\frac{1}{1+
\exp\Big(\beta\max\big(\Delta_xH(\sigma),0)\big)\Big)}\,.$$

\noindent
{\bf Unnamed dynamics:}
$$c(x,\sigma)\,=\,
\exp\Big(-\frac{\beta}{2}\Delta_xH(\sigma)\Big)\,.$$
These three choices satisfy 
the detailed balance condition~\eqref{balanceising} and correspond to one spin flip dynamics.
We call them Glauber type dynamics. 
Let us try to express the rates with the help of the functions $\delta_\sigma$ on the edges.
Using the identity~\eqref{reta},
we have, for any $x\in V$,
$$
\Delta_xH(\sigma)\,=\,
\sum_{y\in V: \{x,y\}\in E}
\sigma(x)\sigma(y)
\,=\,\sum_{e\in E_x}\big(2\delta_{\sigma}(e)-1\big)
\,
$$
where the set $E_x$ 
is the set of the edges $e\in E$ having $x$ as endvertex, i.e.,
$$
E_x\,=\,\big\{\,e=\langle x,y\rangle\,\in E:\, y\in V\,\big\}\,.
$$
For the unnamed dynamics, we obtain
\begin{equation*}
	c(x,\sigma)\,=\,
	\exp\left(- \beta \sum_{e\in E_x}\delta_{\sigma}(e)
+\frac{\beta}{2}|E_x|
	\right)
	\,.
\end{equation*}
On the lattice $\mathbb{Z}^d$, we have $|E_x|=2d$ for any $x\in\mathbb{Z}^d$. 
Up to a constant multiplicative factor, the previous rates are equal to
\begin{equation}\label{GD}
	c(x,\sigma)\,=\,
	\exp\left(- \beta \sum_{e\in E_x}\delta_{\sigma}(e)
	\right)
	\,.
\end{equation}
We will mostly use this choice when trying to build a dynamical coupling between the Ising
model and the FK model.
\section{FK dynamics}
\label{fkdyna}
In this section, we define the counterpart of a Glauber type dynamics for the FK model.
We are interested in dynamics which modify at most one edge at a time.
We build a Markov process on percolation configurations, which
is defined through its infinitesimal generator $L$ (see \cite{liggett}).
This generator $L$ acts on  functions $g$ of the percolation configuration and it is of the following form.
For any function $g: \{0,1\}^E \rightarrow \BBr$, we have
$$\forall \eta\in \{0,1\}^E \qquad
Lg(\eta)\,=\, \sum_{e\in E}
c(\eta,\eta^e)(g(\eta^e)-g(\eta))\,,$$
where $\eta^e$ is the configuration obtained from $\eta$ by changing the state of the edge~$e$
	and
the quantity $c(\eta,\eta^e)$ is
the rate at which the edge $e$ changes its state
when the system is in the state $\eta$.
The transition rates $c(\cdot,\cdot)$ are defined on
$\{0,1\}^E \times \{0,1\}^E$
	and have the following properties:
for any
$\eta\in \{0,1\}^E$ and $e\in E$,
\begin{eqnarray}\label{tauxpglauber}
&& {c}(\eta,\eta') = 0\,\,\,{\mbox{if}}\,\,\, |\{e\in E,\,\eta(x)\neq \eta'(e)\}|\geq 2, {\nonumber}\\
&& {c}(\eta,\eta^e)>0\,\,{\mbox{for any}}\,\, e\in E, \\
&& c(\eta,\eta)=-\sum_{e\in E}c(\eta,\eta^e). {\nonumber}
\end{eqnarray}
We say that the transition
rates $c(\cdot,\cdot)$
satisfy the detailed balance condition with respect to
	the FK measure $\phi_{p,2}$ if
\begin{equation}\label{balancefk}
\forall\eta\in \{0,1\}^E
\qquad \forall\,e\in E
	\qquad \phi_{p,2}(\eta) c(\eta,\eta^e)\,=\,
	\phi_{p,2}(\eta^e) c(\eta^e,\eta)\,.
\end{equation}
When this
detailed balance condition
holds, the associated dynamics
is reversible with respect to
	the FK measure $\phi_{p,2}$.
Several rates satisfy the conditions~(\ref{tauxpglauber}) and the detailed balance condition (\ref{balancefk}).
A natural choice is the following.
For
an edge $e=\langle x,y\rangle\in E$ and a configuration
$\eta\in \{0,1\}^E$, we set  $\gamma_{\eta}(e)=1$ if the endpoints $x,y$ of $e$ are connected
	by a path of open edges in $\eta$ which does not use the edge $e$ itself, and we
set  $\gamma_{\eta}(e)=0$ otherwise.
We define then
\begin{equation*}
c(e,\eta):=c(\eta,\eta^e)
\,=\,
\begin{cases}
	1-p&\text{ if }\eta(e)=1\,,\cr
	p&\text{ if }\eta(e)=0\text{ and }\gamma_\eta(e)=1\,,\cr
	p/2&\text{ if }\eta(e)=0\text{ and }\gamma_\eta(e)=0\,.\cr
\end{cases}
\end{equation*}
One can check that these rates satisfy~\eqref{tauxpglauber}
and~
\eqref{balancefk}.
\section{Our dream process}
\label{goal}
The main purpose of this paper is  to
construct a Markov process
on the space $\{0,1\}^E\times \{-1,+1\}^V$
whose marginal on the spins is a Glauber dynamics
and which is
reversible with respect to the coupling measure ${IP}$.
Suppose that
$(\eta_t,\sigma_t)_{t\geq 0}$
is such a Markov process and let
$q((\eta,\sigma),(\eta',\sigma'))$ be its transition rates. These transition rates satisfy the following
usual conditions:
 \begin{equation}\label{qmatrix1}
(\eta,\sigma)\neq (\eta',\sigma')\quad \Longrightarrow  \quad   q((\eta,\sigma), (\eta',\sigma'))\geq 0,
\end{equation}
 and for any $(\eta,\sigma)\in \{0,1\}^E\times \{-1,+1\}^V$,
\begin{equation}\label{qmatrix}
\sum_{(\eta',\sigma')\in \{0,1\}^E\times \{-1,+1\}^V}q((\eta,\sigma), (\eta',\sigma'))=0\,.
\end{equation}
The reversibility property with respect to the coupling measure ${IP}$ is equivalent to
the following detailed balance equation: 
\begin{multline}\label{eqreversible}
\forall (\eta,\sigma),(\eta',\sigma')\in\{0,1\}^E\times \{-1,1\}^V\qquad\hfill\cr
IP(\eta,\sigma)\,q((\eta,\sigma), (\eta',\sigma'))= IP(\eta',\sigma')\,q((\eta',\sigma'), (\eta,\sigma)).
\end{multline}
In general, the marginals of a Markov process are not themselves Markov processes.
Now, we know from Ball and Yeo \cite{Ball} (see theorem \ref{theoball} of the appendix) that the second marginal
$(\sigma_t)_{t\geq 0}$ of the dynamics
is a  Markov jump process with rates ${c}(\sigma,\sigma')$ if and only if
for any $\eta \in \{0,1\}^E$, $\sigma,\sigma' \in \{-1,1\}^V$,
the sum $\sum_{\eta'\in \{0,1\}^E} q((\eta,\sigma),(\eta',\sigma'))$ does not depend on $\eta$.
We have then
\begin{equation}\label{secondmargin}
\forall\eta \in \{0,1\}^E\quad\forall\sigma,\sigma' \in \{-1,1\}^V\qquad
 \sum_{\eta'\in \{0,1\}^E} q((\eta,\sigma),(\eta',\sigma'))= {c}(\sigma,\sigma')\,.
\end{equation}
Suppose that this is the case.
The second marginal $(\sigma_t)_{t\geq 0}$ will then be the Glauber dynamics with the transition rates (\ref{GD}) as soon as the sum in (\ref{secondmargin})
is equal, up to some positive constant, to the transitions rates given
in (\ref{GD}). More precisely, we should have, up to some positive multiplicative constant,
\begin{equation}\label{marginal}
{c}(\sigma,\sigma')
=
\left\{
\begin{array}{cc}
	(1-p)^{|\{e\in E_x,\,\, \delta_{\sigma}(e)=1\}|}\,\, & \text{if} 
	\,\, \sigma'=\sigma^x \,,\\
-\sum_{x\in V}(1-p)^{|\{e\in E_x,\,\, \delta_{\sigma}(e)=1\}|}\,\, & 
	\text{if} \,\, \sigma'=\sigma \,,\\
	0 & \text{otherwise,}
\end{array}
\right.
\end{equation}
where $E_x$ denotes the set of the edges $e$ in $E$ having $x$ as endvertex.
Ideally, we would also wish that
the first marginal $(\eta_t)_{t\geq 0}$ is a Markov jump process with rates ${\tilde c}(\eta,\eta')$.
This will be the case if and only if, for any $\eta \in \{0,1\}^E$, $\eta' \in \{0,1\}^E$, $\sigma \in \{-1,1\}^V$, the sum $\sum_{\sigma'\in \{-1,1\}^V} q((\eta,\sigma),(\eta',\sigma'))$ does not depend on $\sigma$.
We would have then
\begin{equation}\label{premieremargin}
\forall\eta, \eta' \in \{0,1\}^E\quad \forall\sigma \in \{-1,1\}^V\qquad
  \sum_{\sigma'\in \{-1,1\}^V} q((\eta,\sigma),(\eta',\sigma'))={\tilde c}(\eta,\eta')\,.
\end{equation}
So, to sum up,
we dream of constructing a Markov process on $\{0,1\}^E\times \{-1,1\}^V$ with transition rates satisfying (\ref{qmatrix1}), (\ref{qmatrix}), (\ref{eqreversible}), (\ref{secondmargin}), (\ref{marginal})
and~(\ref{premieremargin}),
that is a Markov jump process $(\eta_t,\sigma_t)_{t\geq 0}$ which is reversible with respect to the $IP$ measure,
whose marginal on
the spins is a Glauber dynamics, and whose marginal on the edges is Markovian.
\section{One change at a time}
\label{one}

We present here our first try.
We build a simple dynamics which updates at most
one site or one edge at each time, which is reversible with respect to the coupling measure $IP$
and whose marginal on the spins has the same rates as a Glauber dynamics, although it is not Markovian.
Recall that, for $x\in V$,
$E_x$ is the set of the edges $e$ in $E$ having $x$ as endvertex, i.e.,
$$
E_x\,=\,\big\{\,e=\langle x,y\rangle\,\in E:\, y\in V\,\big\}\,.
$$
For $\sigma\in \{-1,1\}^V$ and $x\in V$, $\sigma^x$ is the element of $\{-1,1\}^V$ obtained from $\sigma$ by reversing
the spin at $x$, i.e.,
$$
\sigma^x(x)=-\sigma(x)\,,\qquad
\forall\, y\in V\setminus \{x\}\quad \sigma^x(y)=\sigma(y)\,.
$$
For $\eta\in \{0,1\}^E$ and $e\in E$,
we denote by
$\eta^e$ the element of $\{0,1\}^E$ obtained from $\eta$ by changing the value of $\eta(e)$, i.e.,
$$\eta^e(e)=1-\eta(e)\,,\qquad
\forall f\in E\setminus\{\,e\,\}\quad
\eta'(f)=\eta(f)\,.$$
Before introducing the transition rates of the one change dynamics, we give a condition
that the transition rates have to fulfill in order to have the properties
announced at the beginning of the section.
\begin{lem}\label{lemn}
Let $(\eta_t,\sigma_t)_{t\geq 0}$ be a  Markov jump process, which updates at most
one site or one edge at each time  and which is reversible with respect to the coupling measure IP.
Let us denote by
$q((\eta,\sigma), (\eta',\sigma'))$ the transition rates of 
$(\eta_t,\sigma_t)_{t\geq 0}$.
Let $(\eta,\sigma)$ be a pair of compatible configurations.
If there exists $x\in V$ such that
$q((\eta,\sigma), (\eta,\sigma^x))\neq 0$,
then all the edges of $E_x$ are closed in~$\eta$.
\end{lem}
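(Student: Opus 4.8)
The plan is to first observe that a strictly positive jump rate out of a compatible pair can only lead to another compatible pair, and then to exploit the fact that $\sigma$ and $\sigma^x$ agree everywhere except at $x$ in order to force every edge incident to $x$ to be closed in $\eta$.

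First I would use reversibility. Since $(\eta,\sigma)$ is compatible, formula~\eqref{ipsimply} gives $IP(\eta,\sigma)>0$ (for $0<p<1$; the cases $p\in\{0,1\}$ are degenerate and can be treated separately). Assume $q((\eta,\sigma),(\eta,\sigma^x))\neq 0$; by~\eqref{qmatrix1} this rate is then strictly positive, as $(\eta,\sigma)\neq(\eta,\sigma^x)$. The detailed balance equation~\eqref{eqreversible} applied to the pair of states $(\eta,\sigma)$ and $(\eta,\sigma^x)$ reads
\begin{equation*}
IP(\eta,\sigma)\,q\big((\eta,\sigma),(\eta,\sigma^x)\big)\,=\,IP(\eta,\sigma^x)\,q\big((\eta,\sigma^x),(\eta,\sigma)\big)\,.
\end{equation*}
The left-hand side is a product of two strictly positive numbers, hence the right-hand side is strictly positive, so in particular $IP(\eta,\sigma^x)>0$. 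By~\eqref{ipsimply} this means $(\eta,\sigma^x)\in\cC$, i.e.\ $\eta$ and $\sigma^x$ are compatible.

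Next I would run the edge-by-edge argument. Let $e=\langle x,y\rangle$ be an arbitrary edge of $E_x$ and suppose, for contradiction, that $\eta(e)=1$. Compatibility of $(\eta,\sigma)$ gives $\eta(e)\le\delta_\sigma(e)$, hence $\delta_\sigma(e)=1$, that is $\sigma(x)=\sigma(y)$. Compatibility of $(\eta,\sigma^x)$ likewise gives $\delta_{\sigma^x}(e)=1$, that is $\sigma^x(x)=\sigma^x(y)$. But $\sigma^x(x)=-\sigma(x)$ while $\sigma^x(y)=\sigma(y)$ since $y\neq x$, so $-\sigma(x)=\sigma(y)=\sigma(x)$, which is impossible for $\sigma(x)\in\{-1,1\}$. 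Therefore $\eta(e)=0$ for every $e\in E_x$, which is the assertion of the lemma.

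The argument is short, and the only point that needs a little care is the passage, through detailed balance, from ``the rate is nonzero'' to ``$(\eta,\sigma^x)$ is compatible''; this uses both that $IP$ assigns positive mass to every compatible pair and the sign constraint~\eqref{qmatrix1}. The hypothesis that the process changes at most one site or one edge per step is needed only to ensure that the transition under consideration is exactly the spin flip at $x$ with $\eta$ left unchanged, so that detailed balance is invoked for the correct pair of states; no finer feature of one-change dynamics enters. I would expect this to be the ``main obstacle'' only in the sense that it is the one place where the hypotheses are genuinely used.
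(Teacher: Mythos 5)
Your proof is correct and follows essentially the same route as the paper's: detailed balance plus positivity of $IP$ on compatible pairs forces $(\eta,\sigma^x)\in\cC$, and the joint compatibility of $(\eta,\sigma)$ and $(\eta,\sigma^x)$ then yields $\eta(e)\le\min\big(\delta_\sigma(e),1-\delta_\sigma(e)\big)=0$ for every $e\in E_x$. The only cosmetic difference is that the paper cancels the common factor $p^{|\{e:\eta(e)=1\}|}(1-p)^{|\{e:\eta(e)=0\}|}$ explicitly rather than arguing via strict positivity of $IP(\eta,\sigma)$, and your remark about the degenerate cases $p\in\{0,1\}$ is a reasonable extra precaution that the paper leaves implicit.
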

\begin{proof}
The detailed balance equation (\ref{eqreversible}) yields that,
for any $\eta\in \{0,1\}$, $\sigma\in \{-1,1\}^V$ and any $x\in V$,
$$
IP(\eta,\sigma)q((\eta,\sigma),(\eta,\sigma^x))= IP(\eta,\sigma^x)q((\eta,\sigma^x),(\eta,\sigma)),
$$
which gives, thanks to (\ref{ipsimply}),
$$
\BBone_{(\eta,\sigma)\in {\cal C}}\,q((\eta,\sigma),(\eta,\sigma^x))= \BBone_{(\eta,\sigma^x)\in {\cal C}}\, q((\eta,\sigma^x),(\eta,\sigma)).
$$
Since $(\eta,\sigma)$ are supposed to be compatible, the last equality implies that
$$
q((\eta,\sigma),(\eta,\sigma^x))= q((\eta,\sigma^x),(\eta,\sigma))\prod_{e\in E_x}\BBone_{\eta(e)\leq 1-\delta_{\sigma}(e)}\,.
$$
Consequently, if $
q((\eta,\sigma),(\eta,\sigma^x))\neq 0$ then necessarily $ \eta(e)\leq 1-\delta_{\sigma}(e)$ for any $e\in E_x$.
Since  the configurations $\eta$ and $\sigma$ are compatible, necessarily
$\eta(e)= 0$ for any $e\in E_x$.
\end{proof}
\noindent We define next the transition rates of our dynamics.
\begin{defi}\label{def0}
Let $(\eta,\sigma)$, $(\eta',\sigma')$ be  two elements of $\{0,1\}^E\times\{-1,1\}^V$.
We consider several cases:

\noindent
$\bullet$ If $\eta'=\eta$ and there exists $x\in V$ such that
$\sigma'=\sigma^x$, $\eta(e)=0$ for any $e\in E_x$,
then we define $ c((\eta,\sigma), (\eta',\sigma'))= 1\,.  $

\noindent
$\bullet$ If $\sigma'=\sigma$ and there exists $e\in E$ such that $\eta'=\eta^e$,
then we set
$$c((\eta,\sigma), (\eta',\sigma'))= p\BBone_{\eta(e)=0}\delta_{\sigma}(e)+ (1-p)\BBone_{\eta(e)=1} \,.$$

\noindent
$\bullet$ Otherwise, if
$(\eta,\sigma)\neq (\eta',\sigma')$, then we set
$c((\eta,\sigma), (\eta',\sigma'))= 0$.

\noindent
$\bullet$ Finally, if $(\eta,\sigma)= (\eta',\sigma')$, then we set
$$
c((\eta,\sigma), (\eta,\sigma))= -\sum_{(\eta',\sigma'),\, (\eta',\sigma')\neq (\eta,\sigma)}c((\eta,\sigma), (\eta',\sigma'))\,.
$$
\end{defi}

\begin{pro}\label{prorev1}
The transition rates introduced in definition \ref{def0}  satisfy the   detailed balance equation  (\ref{eqreversible}).
\end{pro}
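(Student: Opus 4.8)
The plan is to verify the detailed balance equation~\eqref{eqreversible} by a direct case analysis following Definition~\ref{def0}, exploiting the simple product form~\eqref{ipsimply} of $IP$. First I would dispose of the trivial situations: when $(\eta,\sigma)=(\eta',\sigma')$ the equation is a tautology, and when $(\eta,\sigma)$ and $(\eta',\sigma')$ differ in two or more spins, in two or more edges, or in one spin together with one edge, both rates $q((\eta,\sigma),(\eta',\sigma'))$ and $q((\eta',\sigma'),(\eta,\sigma))$ vanish by the third bullet of Definition~\ref{def0}, so both sides of~\eqref{eqreversible} are zero. This leaves exactly two cases to treat: the spin--flip case $\eta'=\eta$, $\sigma'=\sigma^x$ for some $x\in V$, and the edge--flip case $\sigma'=\sigma$, $\eta'=\eta^e$ for some $e\in E$.

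For the spin--flip case I would first note that the weight $p^{|\{e:\eta(e)=1\}|}(1-p)^{|\{e:\eta(e)=0\}|}$ in~\eqref{ipsimply} depends only on $\eta$, so $IP(\eta,\sigma)$ and $IP(\eta,\sigma^x)$ can differ only through the compatibility indicators $\BBone_{(\eta,\sigma)\in\cC}$ and $\BBone_{(\eta,\sigma^x)\in\cC}$. If $q((\eta,\sigma),(\eta,\sigma^x))\neq 0$, then by Definition~\ref{def0} all edges of $E_x$ are closed in $\eta$; since flipping the spin at $x$ changes $\delta_\cdot(e)$ only for $e\in E_x$, and for such an edge $\eta(e)=0\leq\delta_\tau(e)$ holds for every spin configuration $\tau$, we get $(\eta,\sigma)\in\cC\Leftrightarrow(\eta,\sigma^x)\in\cC$, hence $IP(\eta,\sigma)=IP(\eta,\sigma^x)$. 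Because the condition ``all edges of $E_x$ closed in $\eta$'' is unchanged when $\sigma$ is replaced by $\sigma^x$ (here $\eta'=\eta$), both rates equal $1$, and~\eqref{eqreversible} follows; and if $q((\eta,\sigma),(\eta,\sigma^x))=0$ then $q((\eta,\sigma^x),(\eta,\sigma))=0$ as well and both sides vanish. This is essentially the computation already performed in Lemma~\ref{lemn}.

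For the edge--flip case, the symmetry of~\eqref{eqreversible} under exchanging $(\eta,\sigma)$ with $(\eta^e,\sigma)$, together with $(\eta^e)^e=\eta$, lets me assume $\eta(e)=0$, so that $\eta^e(e)=1$, the forward rate is $p\,\delta_\sigma(e)$ and the backward rate is $1-p$. Inserting~\eqref{ipsimply} and using that $\eta^e$ has one more open edge and one fewer closed edge than $\eta$, the identity to be proved collapses to $\delta_\sigma(e)\,\BBone_{(\eta,\sigma)\in\cC}=\BBone_{(\eta^e,\sigma)\in\cC}$. I expect this bookkeeping of the compatibility indicators to be the only genuinely delicate point: since $\eta\leq\eta^e$ pointwise, compatibility of $(\eta^e,\sigma)$ forces compatibility of $(\eta,\sigma)$, while conversely $(\eta^e,\sigma)\in\cC$ additionally requires $\eta^e(e)=1\leq\delta_\sigma(e)$, i.e.\ $\delta_\sigma(e)=1$; combining these two facts yields exactly the claimed identity. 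Once this is in place, detailed balance holds in every case and the proposition follows.
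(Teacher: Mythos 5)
Your proof is correct and follows essentially the same route as the paper: reduce to the two nontrivial one--change cases and verify detailed balance directly from the product form of $IP$. The only cosmetic difference is in the edge--flip case, where you normalize to $\eta(e)=0$ and reduce to the indicator identity $\delta_\sigma(e)\,\BBone_{(\eta,\sigma)\in\cC}=\BBone_{(\eta^e,\sigma)\in\cC}$, whereas the paper multiplies the rate against the edge-$e$ factor of $IP$ and observes the resulting symmetry; both computations are equivalent.
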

\begin{proof}
We check first that,
for any $(\eta,\sigma)$  in $\{0,1\}^E\times \{-1,1\}^V$ and $e\in E$,
\begin{equation}\label{t2}
IP(\eta,\sigma)c((\eta,\sigma), (\eta^e,\sigma))= IP(\eta^e,\sigma)c((\eta^e,\sigma), (\eta,\sigma))
\end{equation}
Let $(\eta,\sigma)$  in $\{0,1\}^E\times \{-1,1\}^V$ and $e\in E$ be fixed.
We have, since $\eta(e)=1-\eta^e(e)$,
\begin{eqnarray}\label{t1}
&& \left(p\BBone_{\eta(e)=1}\delta_{\sigma}(e)+ (1-p)\BBone_{\eta(e)=0} \right)\times\left(p\BBone_{\eta(e)=0}\delta_{\sigma}(e)+ (1-p)\BBone_{\eta(e)=1} \right){\nonumber}\\
&&= \left(p\BBone_{\eta^e(e)=1}\delta_{\sigma}(e)+ (1-p)\BBone_{\eta^e(e)=0} \right)\times\left(p\BBone_{\eta^e(e)=0}\delta_{\sigma}(e)+ (1-p)\BBone_{\eta^e(e)=1} \right).
\end{eqnarray}
We also have, since $\eta^e=\eta$ on $E\setminus\{e\}$,
\begin{eqnarray}\label{nrev}
&& \prod_{f\in E\setminus\{e\}}\left(p\BBone_{\eta(f)=1}\delta_{\sigma}(f)+ (1-p)\BBone_{\eta(f)=0} \right){\nonumber}\\
&&= \prod_{f\in E\setminus\{e\}}\left(p\BBone_{\eta^e(f)=1}\delta_{\sigma}(f)+ (1-p)\BBone_{\eta^e(f)=0} \right).
\end{eqnarray}
Noting that
$$
IP(\eta,\sigma)=\frac{1}{Z}\left(p\BBone_{\eta(e)=1}\delta_{\sigma}(e)+ (1-p)\BBone_{\eta(e)=0} \right)\prod_{f\in E\setminus\{e\}}\left(p\BBone_{\eta(f)=1}\delta_{\sigma}(f)+ (1-p)\BBone_{\eta(f)=0} \right),
$$
we get (\ref{t2}) by multiplying each side of~\eqref{nrev} by~\eqref{t1}.
We now prove that, for any $(\eta,\sigma)$  in $\{0,1\}^E\times \{-1,1\}^V$  and any $x\in V$,
\begin{equation}\label{t3}
IP(\eta,\sigma)c((\eta,\sigma), (\eta,\sigma^x))= IP(\eta,\sigma^x)c((\eta,\sigma^x), (\eta,\sigma)).
\end{equation}
Noting that $$c((\eta,\sigma), (\eta,\sigma^x))= \prod_{e\in E_x}\BBone_{\eta(e)=0},$$
and that $\delta_{\sigma}(e)=\delta_{\sigma^x}(e)$ for any $e\in E\setminus E_x$, we deduce that
\begin{eqnarray*}\label{e1}
&&c((\eta,\sigma), (\eta,\sigma^x))IP(\eta,\sigma)= {\nonumber}\\
&&= \frac{1}{Z}\left(\prod_{e\in E_x}\BBone_{\eta(e)=0}\right)\left(\prod_{e\in E}\left(p\BBone_{\eta(e)=1}\delta_{\sigma}(e) + (1-p)\BBone_{\eta(e)=0}\right)\right){\nonumber}\\
&&=\frac{1}{Z}\left(\prod_{e\in E_x}((1-p)\BBone_{\eta(e)=0})\right)\left(\prod_{e\in E\setminus E_x}\left(p\BBone_{\eta(e)=1}\delta_{\sigma}(e) + (1-p)\BBone_{\eta(e)=0}\right)\right)
{\nonumber}\\
&&= \frac{1}{Z}\left(\prod_{e\in E_x}((1-p)\BBone_{\eta(e)=0})\right)\left(\prod_{e\in E\setminus E_x}\left(p\BBone_{\eta(e)=1}\delta_{\sigma^x}(e) + (1-p)\BBone_{\eta(e)=0}\right)\right)
{\nonumber}
\\
&&= \frac{1}{Z}\left(\prod_{e\in E_x}\BBone_{\eta(e)=0}\right)\left(\prod_{e\in E}\left(p\BBone_{\eta(e)=1}\delta_{\sigma^x}(e) + (1-p)\BBone_{\eta(e)=0}\right)\right)
{\nonumber}\\
&&= c((\eta,\sigma^x), (\eta,\sigma))IP(\eta,\sigma^x).
\end{eqnarray*}
This finishes the proof of
condition~(\ref{t3}).
	Since the transition rates $c((\eta,\sigma), (\eta',\sigma'))$ given in definition~\ref{def0}
	allow to perform at most one change at a time,
	the reversibility condition reduces to the two conditions~(\ref{t2}) and~(\ref{t3}),
so the proof of Proposition \ref{prorev1} is complete.
\end{proof}
\noindent The following proposition shows that,
if the initial condition $(\eta_0,\sigma_0)$ is distributed as $IP$,
then
the infinitesimal behavior of the marginal on the spins of the process $(\eta_t,\sigma_t)_{t\geq 0}$,
whose rates are introduced in definition \ref{def0}, is
the same as the transition rates given in (\ref{marginal}).
\begin{pro}\label{marg}
Let $(\eta_t,\sigma_t)_{t\geq 0}$ be a  Markov jump process with the transition rates
introduced in definition \ref{def0}. Suppose that  the pair $(\eta_0,\sigma_0)$ is distributed according to $IP$. Then, for any $x\in V$,
for any $\sigma\in \{-1,1\}^V$ and any $s\geq 0$,
\begin{equation}\label{limite}
\lim_{t\rightarrow 0}\frac{1}{t}\BBp(\sigma_{t+s}=\sigma^x|\sigma_s=\sigma)=
(1-p)^{|\{e\in E_x,\,\delta_{\sigma}(e)=1\}|}.
\end{equation}
\end{pro}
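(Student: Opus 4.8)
The plan is to build on the reversibility established in Proposition~\ref{prorev1}. Since the rates of Definition~\ref{def0} satisfy the detailed balance equation~\eqref{eqreversible}, the process $(\eta_t,\sigma_t)_{t\ge 0}$ is reversible, hence stationary, with respect to $IP$; starting it from $IP$ therefore makes $(\eta_s,\sigma_s)$ distributed according to $IP$ for every $s\ge 0$. The first, and only substantial, step is to record this and to identify the conditional law of $\eta_s$ given $\sigma_s=\sigma$. From formula~\eqref{ipsimply}, conditionally on the spins the edge configuration under $IP$ has independent coordinates: an edge $e$ with $\delta_\sigma(e)=0$ is closed with probability one, while an edge $e$ with $\delta_\sigma(e)=1$ is open with probability $p$ and closed with probability $1-p$ (this is the Edwards--Sokal conditional law, and it is genuinely a product measure, the normalisation $p+(1-p)=1$ not coupling the edges). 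Consequently
$$\BBp\big(\eta_s(e)=0\text{ for all }e\in E_x\mid\sigma_s=\sigma\big)=(1-p)^{|\{e\in E_x:\ \delta_\sigma(e)=1\}|},$$
which is precisely the right-hand side of~\eqref{limite}. Alternatively this ratio of weights can be extracted directly from~\eqref{ipsimply} by the same geometric-sum computation that yields~\eqref{transf}.

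Next I would perform a first-order expansion of the transition probability. Since the state space $\{0,1\}^E\times\{-1,1\}^V$ is finite and the jump rates are bounded, the transition semigroup satisfies $P_t=\mathrm{Id}+tQ+O(t^2)$ uniformly, where $Q$ collects the rates $q(\cdot,\cdot)$. Inspecting Definition~\ref{def0}, from a state $(\eta,\sigma)$ the only single jump producing the spin configuration $\sigma^x$ is the transition $(\eta,\sigma)\to(\eta,\sigma^x)$, with rate $\BBone_{\eta(e)=0\ \forall e\in E_x}$; any other way to reach spins $\sigma^x$ (for instance flipping an edge first) requires at least two jumps and contributes $O(t^2)$. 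Summing over the terminal edge configuration,
$$\sum_{\eta'\in\{0,1\}^E}P_t\big((\eta,\sigma),(\eta',\sigma^x)\big)=t\,\BBone_{\eta(e)=0\ \forall e\in E_x}+O(t^2)$$
uniformly in $\eta$.

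Finally I would combine the two ingredients through the Markov property at time $s$: writing
$$\BBp(\sigma_{t+s}=\sigma^x\mid\sigma_s=\sigma)=\sum_{\eta\in\{0,1\}^E}\BBp(\eta_s=\eta\mid\sigma_s=\sigma)\sum_{\eta'\in\{0,1\}^E}P_t\big((\eta,\sigma),(\eta',\sigma^x)\big),$$
substituting the previous display, and using the conditional law from the first step gives $\BBp(\sigma_{t+s}=\sigma^x\mid\sigma_s=\sigma)=t\,(1-p)^{|\{e\in E_x:\ \delta_\sigma(e)=1\}|}+O(t^2)$; dividing by $t$ and letting $t\to 0$ finishes the proof. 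The only step with real content is the first one — using stationarity to evaluate the conditional edge law at time $s$, and recognising that this conditional law is a product of Bernoulli variables, so that the probability that all edges of $E_x$ are closed equals $(1-p)^{|\{e\in E_x:\delta_\sigma(e)=1\}|}$. The semigroup expansion and the $O(t^2)$ bookkeeping are routine and harmless because the state space is finite, so I do not anticipate a genuine obstacle. One subtlety worth flagging is that the marginal on the spins is \emph{not} itself Markovian, so \eqref{limite} is a statement about the stationary regime (initial law $IP$) only, and must not be read as the transition rate of an autonomous spin dynamics.
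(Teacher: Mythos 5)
Your proposal is correct and follows essentially the same route as the paper: both condition on the edge configuration at time $s$, invoke stationarity of $IP$ (via Proposition~\ref{prorev1}) to identify the conditional law of $\eta_s$ given $\sigma_s=\sigma$, recognise that the only relevant rate is $c((\eta,\sigma),(\eta,\sigma^x))=\prod_{e\in E_x}\BBone_{\eta(e)=0}$, and evaluate the resulting ratio of sums. The only cosmetic difference is that you read off the answer from the product--Bernoulli (Edwards--Sokal) form of the conditional edge law, whereas the paper computes the ratio $\sum_{\eta\equiv 0\text{ on }E_x}IP(\eta,\sigma)\big/\sum_{\eta'}IP(\eta',\sigma)$ directly from \eqref{ipsimply} and \eqref{transf}; the two computations are identical in substance.
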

\begin{proof} We have, for any $\sigma\in \{-1,1\}^V$, $x\in V$ and $s\geq 0$,
\begin{eqnarray}\label{ee1}
&& \BBp(\sigma_{t+s}=\sigma^x|\sigma_s=\sigma) = \sum_{\eta\in \{0,1\}^E}\sum_{\eta'\in \{0,1\}^E}\BBp(\sigma_{t+s}=\sigma^x, \eta_{t+s}=\eta',\eta_s=\eta|\sigma_s=\sigma){\nonumber}\\
&&= \sum_{\eta\in \{0,1\}^E}\sum_{\eta'\in \{0,1\}^E}\BBp(\sigma_{t+s}=\sigma^x, \eta_{t+s}=\eta'|\sigma_s=\sigma, \eta_s=\eta) \BBp( \eta_s=\eta|\sigma_s=\sigma).
\end{eqnarray}
Since $(\eta_t,\sigma_t)_{t\geq 0}$ is a  Markov jump process with the transition rates $c((\eta,\sigma),(\eta',\sigma'))$
introduced in definition \ref{def0}, then
\begin{eqnarray}\label{ee2}
\lim_{t\rightarrow 0}\frac{1}{t}\BBp(\sigma_{t+s}=\sigma^x, \eta_{t+s}=\eta'|\sigma_s=\sigma, \eta_s=\eta)
= c((\eta,\sigma),(\eta',\sigma^x))=\BBone_{\eta'=\eta}\prod_{e\in E_x}\BBone_{\eta(e)=0}.
\end{eqnarray}
Hence we get, combining (\ref{ee1}) and (\ref{ee2}), for any $s\geq 0$,
\begin{eqnarray*}
&& \lim_{t\rightarrow 0}\frac{1}{t}\BBp(\sigma_{t+s}=\sigma^x|\sigma_s=\sigma)=\sum_{\eta\in \{0,1\}^E}\left(\prod_{e\in E_x}\BBone_{\eta(e)=0}\right)\BBp( \eta_s=\eta|\sigma_s=\sigma).
\end{eqnarray*}
The measure $IP$ is, by proposition \ref{prorev1}, a reversible  and a stationary measure for the process $(\eta_t,\sigma_t)_{t\geq 0}$.
 Hence, if $(\eta_0,\sigma_0)$ is distributed as $IP$, then, for any $s\geq 0$,  $(\eta_s,\sigma_s)$ is also distributed as $IP$. Consequently, under the hypothesis of proposition \ref{marg}, we have
$$
\BBp( \eta_s=\eta|\sigma_s=\sigma)=\frac{\BBp( \eta_s=\eta,\sigma_s=\sigma)}{\BBp(\sigma_s=\sigma)}=\frac{IP(\eta,\sigma)}
{\sum_{\eta'\in \{0,1\}^E}IP(\eta',\sigma)}\,.$$
We conclude that
\begin{equation}\label{sumtaux}
\lim_{t\rightarrow 0}\frac{1}{t}\BBp(\sigma_{t+s}=\sigma^x|\sigma_s=\sigma)
\, = \,
\frac{\sum_{\eta\in \{0,1\}^E,\,\, \eta\equiv 0\, on\, E_x}IP(\eta,\sigma)}
{\sum_{\eta'\in \{0,1\}^E}IP(\eta',\sigma)}
\,.
\end{equation}
We compute these sums, starting with the very definition of the coupling measure $IP$. We have
\begin{equation}\label{sumaux}
\sum_{\eta\in \{0,1\}^E,\,\, \eta\equiv 0\, on\, E_x}IP(\eta,\sigma)
\,=\,
\frac{1}{Z} (1-p)^{|\{\,e\in E:\delta_\sigma(e)=0\,\}|}
 (1-p)^{|\{\,e\in E_x:\delta_\sigma(e)=1\,\}|}
\,.
\end{equation}
The sum in the denominator has already been computed in equation~\eqref{transf}.
Equations~\eqref{transf}, \eqref{sumtaux} and~\eqref{sumaux} together yield the desired result~\eqref{limite}.
\end{proof}
We consider next the marginal on the edges and we obtain a similar result.
%
\begin{pro}\label{marv}
Let $(\eta_t,\sigma_t)_{t\geq 0}$ be a  Markov jump process with the transition rates
introduced in definition \ref{def0}.
Suppose that  the pair $(\eta_0,\sigma_0)$ is distributed according to $IP$. Then, for any $e\in E$,
for any $\eta\in \{0,1\}^E$ and any $s\geq 0$,
\begin{equation}\label{gimite}
\lim_{t\rightarrow 0}\frac{1}{t}\BBp(\eta_{t+s}=\eta^e|\eta_s=\eta)=
	(1-p)\BBone_{\eta(e)=1}+
	p\BBone_{\eta(e)=0,\, \gamma_\eta(e)=1 }
	+ \frac{p}{2}\BBone_{\eta(e)=0,\, \gamma_\eta(e)=0 }\,.
\end{equation}
\end{pro}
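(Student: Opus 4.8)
The plan is to mirror exactly the argument used for the spin marginal in Proposition~\ref{marg}, now applied to the edge coordinate. First I would expand the transition probability by conditioning on the spin configuration at time $s$: writing
\begin{equation*}
\BBp(\eta_{t+s}=\eta^e\mid \eta_s=\eta)=\sum_{\sigma\in\{-1,1\}^V}\sum_{\sigma'\in\{-1,1\}^V}\BBp(\eta_{t+s}=\eta^e,\sigma_{t+s}=\sigma'\mid \eta_s=\eta,\sigma_s=\sigma)\,\BBp(\sigma_s=\sigma\mid\eta_s=\eta).
\end{equation*}
By the Markov jump property of $(\eta_t,\sigma_t)_{t\ge 0}$ with the rates of Definition~\ref{def0}, dividing by $t$ and letting $t\to 0$ picks out the rate $c((\eta,\sigma),(\eta^e,\sigma'))$, which by Definition~\ref{def0} is nonzero only for $\sigma'=\sigma$ and equals $p\BBone_{\eta(e)=0}\delta_\sigma(e)+(1-p)\BBone_{\eta(e)=1}$. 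Hence the limit becomes
\begin{equation*}
\lim_{t\to 0}\frac1t\BBp(\eta_{t+s}=\eta^e\mid\eta_s=\eta)=\sum_{\sigma\in\{-1,1\}^V}\Big(p\BBone_{\eta(e)=0}\delta_\sigma(e)+(1-p)\BBone_{\eta(e)=1}\Big)\BBp(\sigma_s=\sigma\mid\eta_s=\eta).
\end{equation*}

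Next I would invoke Proposition~\ref{prorev1}: since $IP$ is reversible hence stationary and $(\eta_0,\sigma_0)\sim IP$, we have $(\eta_s,\sigma_s)\sim IP$ for every $s\ge 0$, so $\BBp(\sigma_s=\sigma\mid\eta_s=\eta)=IP(\eta,\sigma)/\sum_{\sigma'}IP(\eta,\sigma')=IP(\eta,\sigma)/\phi_{p,2}(\eta)$, using the first-marginal identity~\eqref{tramsf}. The $(1-p)\BBone_{\eta(e)=1}$ term then contributes exactly $(1-p)\BBone_{\eta(e)=1}$, since the conditional probabilities sum to one. For the other term I must evaluate, when $\eta(e)=0$,
\begin{equation*}
p\,\frac{\sum_{\sigma:\ (\eta,\sigma)\in\cC}\delta_\sigma(e)\,p^{o(\eta)}(1-p)^{c(\eta)}}{\sum_{\sigma:\ (\eta,\sigma)\in\cC}p^{o(\eta)}(1-p)^{c(\eta)}}=p\,\frac{\card\{\sigma:\ (\eta,\sigma)\in\cC,\ \sigma(x)=\sigma(y)\}}{\card\{\sigma:\ (\eta,\sigma)\in\cC\}}
\end{equation*}
for $e=\langle x,y\rangle$, where $o(\eta),c(\eta)$ count open and closed edges (these factors cancel). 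Equivalently this is $p\,\BBp_{\phi_{p,2}}(\delta_\sigma(e)=1\mid \eta)$ where $\sigma$ is the Edwards--Sokal spin given $\eta$: each cluster of $\eta$ gets an independent fair $\pm1$ label.

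The combinatorial heart is thus: given a configuration $\eta$ with $\eta(e)=0$, and $e=\langle x,y\rangle$, the number of compatible spin configurations in which $x$ and $y$ receive equal spins, divided by the total number of compatible spin configurations. A spin configuration is compatible with $\eta$ iff it is constant on each open cluster of $\eta$; so compatible configurations are in bijection with $\{-1,1\}^{k(\eta)}$, and $\delta_\sigma(e)=1$ forces the clusters of $x$ and of $y$ to agree. If $\gamma_\eta(e)=1$, i.e. $x$ and $y$ already lie in the same open cluster of $\eta$, then automatically $\sigma(x)=\sigma(y)$ for every compatible $\sigma$, so the ratio is $1$ and this term gives $p$. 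If $\gamma_\eta(e)=0$, the clusters of $x$ and $y$ are distinct, their two labels are independent fair coins, so the probability they agree is $1/2$ and the term gives $p/2$. Combining the two terms yields precisely the right-hand side of~\eqref{gimite}. The main obstacle, such as it is, is bookkeeping the cluster-counting/Edwards--Sokal argument cleanly; no analytic difficulty arises, since everything reduces to the elementary identity that distinct clusters carry independent uniform spins while a single cluster carries one spin.
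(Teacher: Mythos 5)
Your proof is correct and follows essentially the same route as the paper's: decompose by conditioning on $\sigma_s$, identify the infinitesimal rate $c((\eta,\sigma),(\eta^e,\sigma))$ from Definition~\ref{def0}, use stationarity of $IP$ to write $\BBp(\sigma_s=\sigma\mid\eta_s=\eta)=IP(\eta,\sigma)/\phi_{p,2}(\eta)$, and evaluate the resulting sum. The only difference is that the paper states the final identity~\eqref{tumaux} without justification, whereas you supply the (correct) Edwards--Sokal cluster-counting argument showing that the conditional probability of $\delta_\sigma(e)=1$ given $\eta$ is $1$ when $\gamma_\eta(e)=1$ and $1/2$ when $\gamma_\eta(e)=0$.
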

\begin{proof} We have, for any $\eta\in \{0,1\}^E$, $e\in E$ and $s\geq 0$,
\begin{eqnarray}\label{ff1}
&&
\BBp(\eta_{t+s}=\eta^e|\eta_s=\eta)
	= \sum_{\sigma\in \{-1,1\}^V}\sum_{\sigma'\in \{-1,1\}^V}
	\BBp(\sigma_{t+s}=\sigma', \eta_{t+s}=\eta^e,
	\sigma_s=\sigma | \eta_s=\eta {\nonumber})\\
	&&= \sum_{\sigma\in \{-1,1\}^V}\sum_{\sigma'\in \{-1,1\}^V}
	\BBp(\sigma_{t+s}=\sigma', \eta_{t+s}=\eta^e|\sigma_s=\sigma, \eta_s=\eta)
	\BBp( \sigma_s=\sigma | \eta_s=\eta).
\end{eqnarray}
Since $(\eta_t,\sigma_t)_{t\geq 0}$ is a  Markov jump process with the transition rates $c((\eta,\sigma),(\eta',\sigma'))$
introduced in definition \ref{def0}, then
 \begin{multline}\label{ff2}
\lim_{t\rightarrow 0}\frac{1}{t}
	\BBp(\sigma_{t+s}=\sigma', \eta_{t+s}=\eta^e|\sigma_s=\sigma, \eta_s=\eta)
= c((\eta,\sigma),(\eta^e,\sigma'))\hfill\cr
	=\BBone_{\sigma'=\sigma}
	\big(p\BBone_{\eta(e)=0}\delta_{\sigma}(e)+ (1-p)\BBone_{\eta(e)=1}\big) \,.
\end{multline}
Combining (\ref{ff1}) and (\ref{ff2}), 
we get, for any $s\geq 0$,
\begin{multline*}
 \lim_{t\rightarrow 0}\frac{1}{t}
\BBp(\eta_{t+s}=\eta^e|\eta_s=\eta)
	=\hfill\cr
	\sum_{\sigma\in \{-1,1\}^V}
	\big(p\BBone_{\eta(e)=0}\delta_{\sigma}(e)+ (1-p)\BBone_{\eta(e)=1}\big)
	\BBp( \sigma_s=\sigma | \eta_s=\eta).
\end{multline*}
The measure $IP$ is, by proposition \ref{prorev1}, a reversible  and a stationary measure for the process $(\eta_t,\sigma_t)_{t\geq 0}$.
 Hence, if $(\eta_0,\sigma_0)$ is distributed as $IP$, then, for any $s\geq 0$,  $(\eta_s,\sigma_s)$ is also distributed as $IP$. Consequently, under the hypothesis of proposition \ref{marv}, we have
$$
\BBp( \sigma_s=\sigma | \eta_s=\eta)
	=\frac{\BBp( \eta_s=\eta,\sigma_s=\sigma)}{\BBp(\eta_s=\eta)}=\frac{IP(\eta,\sigma)}
{\sum_{\sigma'\in \{-1,1\}^V}IP(\eta,\sigma')}\,.$$
We conclude that
\begin{equation}\label{sumfaux}
 \lim_{t\rightarrow 0}\frac{1}{t}
\BBp(\eta_{t+s}=\eta^e|\eta_s=\eta)
\, = \,
	\frac{\sum_{\sigma\in \{-1,1\}^V}
	\big(p\BBone_{\eta(e)=0}\delta_{\sigma}(e)+ (1-p)\BBone_{\eta(e)=1}\big)
	IP(\eta,\sigma)}
{\sum_{\sigma'\in \{-1,1\}^V}IP(\eta,\sigma')}
\,.
\end{equation}
We compute these sums, starting with the very definition of the coupling measure $IP$. We have
\begin{multline}\label{tumaux}
	{\sum_{\sigma\in \{-1,1\}^V}
	\big(p\BBone_{\eta(e)=0}\delta_{\sigma}(e)+ (1-p)\BBone_{\eta(e)=1}\big)
	IP(\eta,\sigma)}
\,=\,\hfill\cr
	\Big(
	p\BBone_{\eta(e)=0,\, \gamma_\eta(e)=1 }
	+ \frac{p}{2}\BBone_{\eta(e)=0,\, \gamma_\eta(e)=0}
	+
	(1-p)\BBone_{\eta(e)=1}
	\Big)
	\Phi_{p,2}(\eta)
	\,.
\end{multline}
The sum in the denominator has already been computed in equation~\eqref{tramsf}, it is equal to $\Phi_{p,2}(\eta)$.
Equations~\eqref{tramsf}, \eqref{sumfaux} and~\eqref{tumaux} together yield the desired result~\eqref{gimite}.
\end{proof}
\noindent Although the infinitesimal behavior of the marginal on the spins of $(\eta_t,\sigma_t)_{t\geq 0}$ is the same
as for the Glauber dynamics with the transition rates given in (\ref{marginal}), the process
$(\sigma_t)_{t\geq 0}$ does not   evolve according to a Glauber dynamics, because it is not a Markovian process. 
Similarly, the marginal on the edges is not a Markovian process. We summarize these results in the following theorem.
\begin{theo}\label{theo20}
We suppose that the set of edges $E$ is not empty.
Let $(\eta_t,\sigma_t)_{t\geq 0}$ be a  Markov jump process with the transition rates
introduced in definition \ref{def0}. Then $(\eta_t,\sigma_t)_{t\geq 0}$ is reversible with respect to the coupling measure IP.
However its two marginal processes $(\eta_t)_{t\geq 0}$ and $(\sigma_t)_{t\geq 0}$ are  not Markovian jump processes.
\end{theo}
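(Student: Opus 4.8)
The reversibility assertion is already established in Proposition~\ref{prorev1}, so the only new content is the non-Markovianity of the two marginals. The plan is to exhibit, for each marginal, a concrete finite graph $(V,E)$ and a pair of states that witnesses the failure of the Markov property. The cleanest route is to use the characterization recalled before equation~\eqref{secondmargin}: the marginal $(\sigma_t)_{t\ge 0}$ is Markovian if and only if $\sum_{\eta'} q((\eta,\sigma),(\eta',\sigma'))$ is independent of $\eta$, and symmetrically for $(\eta_t)_{t\ge 0}$ with the roles of $\eta$ and $\sigma$ swapped. So I would first compute these partial sums explicitly from Definition~\ref{def0} and then simply point out that they genuinely depend on the conditioning coordinate.

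For the spin marginal: fix a vertex $x$ and a spin configuration $\sigma$, and compute $\sum_{\eta'} q((\eta,\sigma),(\eta,\sigma^x))$. By Definition~\ref{def0} this equals $\prod_{e\in E_x}\BBone_{\eta(e)=0}$, i.e.\ it is $1$ if every edge incident to $x$ is closed in $\eta$ and $0$ otherwise. As soon as $E_x\neq\emptyset$ — which holds for some $x$ since $E\neq\emptyset$ — this is not constant in $\eta$: take $\eta\equiv 0$ versus an $\eta$ with one edge of $E_x$ open but still compatible with $\sigma$ (possible whenever $\sigma$ has that edge monochromatic, so choose $\sigma$ constant). Hence the criterion of Theorem~\ref{theoball} fails and $(\sigma_t)_{t\ge 0}$ is not a Markov jump process. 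For the edge marginal: fix an edge $e=\langle x,y\rangle$ and an edge configuration $\eta$ with $\eta(e)=0$, and compute $\sum_{\sigma'} q((\eta,\sigma),(\eta^e,\sigma)) = p\,\delta_\sigma(e)$. This depends on $\sigma$ through $\delta_\sigma(e)$: on a graph containing at least one edge one may pick $\sigma$ with $\sigma(x)=\sigma(y)$ (value $p$) and, after checking compatibility with $\eta$ is still possible, $\sigma$ with $\sigma(x)\neq\sigma(y)$ (value $0$). So the analogous criterion fails for the edges as well. One should take $\eta$ to be the all-closed configuration so that compatibility with \emph{any} $\sigma$ is automatic, which removes all fuss.

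A subtlety to address carefully is that Theorem~\ref{theoball} characterizes when the marginal is Markov \emph{with some rate matrix}; to conclude non-Markovianity one must make sure the failure of the "independence of conditioning" condition is not an artifact of a bad choice but is genuine, i.e.\ the law of $(\sigma_t)$ started from $IP$ really is not that of any Markov jump process. The honest way is to invoke Theorem~\ref{theoball} in its iff form as stated in the appendix: the marginal is Markov iff the sum condition holds for \emph{all} choices; we have produced one choice where it fails, hence the marginal is not Markov. I expect this to be the main (though modest) obstacle — making the logical quantifiers line up with the precise statement of Ball and Yeo's theorem, and choosing the witness configurations so that all compatibility constraints in $\cC$ are trivially satisfied. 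Everything else is a direct substitution into Definition~\ref{def0}. I would close by remarking that the same phenomenon was already visible in Propositions~\ref{marg} and~\ref{marv}: the infinitesimal spin rates computed there depended on $\eta$ only through the event $\{\eta\equiv 0 \text{ on } E_x\}$, which is exactly the obstruction.
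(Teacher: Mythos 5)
Your proposal follows exactly the paper's own route: reversibility is delegated to Proposition~\ref{prorev1}, the partial sums $\sum_{\eta'}c((\eta,\sigma),(\eta',\sigma^x))=\prod_{e\in E_x}\BBone_{\eta(e)=0}$ and $\sum_{\sigma'}c((\eta,\sigma),(\eta^e,\sigma'))=p\BBone_{\eta(e)=0}\delta_\sigma(e)+(1-p)\BBone_{\eta(e)=1}$ are computed from Definition~\ref{def0}, and their dependence on $\eta$ (resp.\ $\sigma$) is fed into the Ball--Yeo lumpability criterion of Theorem~\ref{theoball}. Your only addition is to spell out explicit witness configurations showing the sums genuinely take two distinct values, which the paper leaves implicit; this is a correct and harmless elaboration of the same argument.
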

\begin{proof}
The reversibility property has been proved in proposition  \ref{prorev1}. Yet the sum $$\sum_{\eta'\in \{0,1\}^E}c((\eta,\sigma), (\eta',\sigma^x))$$  depends on $\eta$ for some $\sigma\in \{-1,1\}^V$ and $x\in V$.
In fact, we have, for any  $\sigma\in \{-1,1\}^V$ and $x\in V$,
$$
\sum_{\eta'\in \{0,1\}^E}c((\eta,\sigma), (\eta',\sigma^x))=c((\eta,\sigma), (\eta,\sigma^x))=\prod_{e\in E_x}\BBone_{\eta(e)=0}.
$$
Similarly, the sum $$\sum_{\sigma'\in \{-1,1\}^V}c((\eta,\sigma), (\eta^e,\sigma'))$$
depends on $\sigma$ for some
$\eta\in \{0,1\}^E$ and $e\in E$.
In fact, we have, for any
$\eta\in \{0,1\}^E$ and $e\in E$,
$$
\sum_{\sigma'\in \{-1,1\}^V}c((\eta,\sigma), (\eta^e,\sigma'))= c((\eta,\sigma), (\eta^e,\sigma))= p\BBone_{\eta(e)=0}\delta_{\sigma}(e)+ (1-p)\BBone_{\eta(e)=1}.
$$
The proof of Theorem \ref{theo20} is complete thanks to Theorem 3.1 in Ball and Yeo \cite{Ball} (see theorem \ref{theoball} in the appendix).
\end{proof}

\noindent Theorem \ref{theo20} shows that the Markov jump process with the transition rates given in definition \ref{def0} does not fulfill our dream,
which was to build a Markov jump process on $\{0,1\}^E\times \{-1,1\}^V$, reversible with respect to the $IP$ measure,
with a Glauber dynamics as the marginal on the spins and such that the marginal on the edges is Markovian.  The following theorem shows  that there is no hope to realize this dream with  a Markov jump process $(\eta_t,\sigma_t)_{t\geq 0}$ which updates at most one site or one edge at each time.
\begin{theo}\label{theoonechange}
Let $(\eta_t,\sigma_t)_{t\geq 0}$ be a Markov jump process which updates at most one site or one edge at each time and whose marginal on the spins is a Markov jump process with transition rates  satisfying (\ref{tauxglauber}). Then $(\eta_t,\sigma_t)_{t\geq 0}$ cannot be reversible with respect to the coupling measure $IP$.
\end{theo}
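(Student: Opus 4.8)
The plan is to argue by contradiction. Suppose such a process $(\eta_t,\sigma_t)_{t\geq 0}$ exists, with transition rates $q((\eta,\sigma),(\eta',\sigma'))$, which updates at most one site or one edge at each step, is reversible with respect to $IP$, and whose marginal on the spins is a Markov jump process with rates $c(\sigma,\sigma')$ satisfying~\eqref{tauxglauber}. By Theorem~\ref{theoball} in the appendix, the Markovianity of the spin marginal forces $\sum_{\eta'\in\{0,1\}^E} q((\eta,\sigma),(\eta',\sigma^x))$ to be independent of $\eta$ (for each fixed $\sigma$ and $x$). But since the dynamics changes at most one object at a time, a transition from $(\eta,\sigma)$ to a configuration with spin part $\sigma^x$ must leave the edge configuration unchanged, so this sum reduces to the single term $q((\eta,\sigma),(\eta,\sigma^x))$. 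Hence $q((\eta,\sigma),(\eta,\sigma^x))$ does not depend on $\eta$; call this common value $c(x,\sigma)$, which by~\eqref{tauxglauber} must be strictly positive for every $x\in V$ and every $\sigma$.

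Next I would invoke Lemma~\ref{lemn}: since $q((\eta,\sigma),(\eta,\sigma^x))>0$ for every compatible pair $(\eta,\sigma)$ and every $x\in V$, the lemma tells us that every edge of $E_x$ must be closed in $\eta$. Now I derive the contradiction by choosing $(\eta,\sigma)$ appropriately. Pick any edge $e_0=\langle x_0,y_0\rangle\in E$ (possible since $E\neq\emptyset$), take $\sigma$ to be the all-$+1$ spin configuration so that $\delta_\sigma(f)=1$ for every $f\in E$, and take $\eta$ to be the configuration with $\eta(e_0)=1$ and $\eta(f)=0$ for all $f\neq e_0$. This pair is compatible, since $\eta(f)\le\delta_\sigma(f)$ everywhere. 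Applying the conclusion of the previous paragraph at the vertex $x_0$: we would need every edge of $E_{x_0}$ to be closed in $\eta$, but $e_0\in E_{x_0}$ is open. This is the desired contradiction, so no such process can be reversible with respect to $IP$.

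The argument is essentially a packaging of Lemma~\ref{lemn} together with the Ball--Yeo characterization, so there is no single hard computational step; the main point to get right is the reduction in the first paragraph — that the ``at most one change at a time'' hypothesis collapses the sum $\sum_{\eta'} q((\eta,\sigma),(\eta',\sigma^x))$ to the diagonal term $q((\eta,\sigma),(\eta,\sigma^x))$, so that the Ball--Yeo independence-of-$\eta$ condition, combined with the positivity requirement~\eqref{tauxglauber} on the spin rates, genuinely forces $q((\eta,\sigma),(\eta,\sigma^x))>0$ for \emph{all} compatible $(\eta,\sigma)$ and not merely for some of them. One should also note explicitly that we only need~\eqref{eqreversible} restricted to pairs differing in a single spin, which is exactly the hypothesis feeding Lemma~\ref{lemn}. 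I expect the write-up to be short, perhaps half a page.
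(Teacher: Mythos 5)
Your proof is correct and follows essentially the same route as the paper's: it reduces the Ball--Yeo sum to the single diagonal term $q((\eta,\sigma),(\eta,\sigma^x))$, deduces that this equals $c(\sigma,\sigma^x)>0$ for every compatible pair, and then contradicts Lemma~\ref{lemn}. Your explicit construction of a compatible pair with an open edge incident to $x_0$ simply spells out the final contradiction that the paper's write-up leaves implicit.
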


\begin{proof}
Let $(\eta_t,\sigma_t)_{t\geq 0}$ be a Markov jump process satisfying the hypothesis of theorem \ref{theoonechange} and
let $q((\eta,\sigma),(\eta',\sigma'))$ be its transition rates. Since this Markov process
updates at most one site or one edge at each time, then
\begin{equation}
\sum_{\eta'\in \{0,1\}^E} q((\eta,\sigma),(\eta',\sigma^x))= q((\eta,\sigma),(\eta,\sigma^x)).
\end{equation}
Since the  marginal  on the spins of $(\eta_t,\sigma_t)_{t\geq 0}$ is a Markov process with
transition rates $c(\sigma,\sigma')$ satisfying (\ref{tauxglauber}), Theorem 3.1 in Ball and Yeo \cite{Ball} gives that
$$\forall \eta\in \{0,1\}^E \qquad
\sum_{\eta'\in \{0,1\}^E} q((\eta,\sigma),(\eta',\sigma^x)))=c(\sigma,\sigma^x)\,.$$
Hence, for any $\eta\in \{0,1\}^E$, any $\sigma\in \{-1,1\}^V$ and any $x\in V$
$$
q((\eta,\sigma),(\eta,\sigma^x))= c(\sigma,\sigma^x).
$$
Suppose now that the detailed balance equation (\ref{eqreversible}) is satisfied. We deduce then from (\ref{tauxglauber}) and lemma \ref{lemn}
that necessarily $\eta(e)=0$ for $e\in E_x$.
We conclude that the detailed balance equation (\ref{eqreversible}) is not satisfied for every $\eta\in \{0,1\}^E$ and every $\sigma\in \{-1,1\}^V$.  The proof of Theorem \ref{theoonechange} is complete.
\end{proof}

\noindent Theorem~$3.2$ in \cite{Ball} (see theorem \ref{theoball2} in the appendix) implies that, for a process like the above one, in which
at most one marginal process moves at each time step, the marginal processes are Markovian if and only
if they are independent. If that were the case, then the equilibrium measure would be a product
measure. Therefore  it is impossible to realize our dream with a process
which changes only one edge or one spin at a time. In the next section, we construct a process
which can change simultaneously one spin and its incident edges.

\section{One site and the incident edges}
\label{two}
We build here a dynamics which updates at most
one site and its incident edges at each time, which is reversible with respect to the coupling measure $IP$
and whose marginal on the spins is a Glauber dynamics.
For $x\in V$, recall that
$E_x$ is the set of the edges $e$ in $E$ having $x$ as endvertex, i.e.,
$$
E_x=\{e\in E,\, e=\langle x,y\rangle,\, y\in V\},
$$
and define
$$
\cC_x=\big\{(\eta,\sigma)\in \{0,1\}^E\times \{-1,1\}^V: \eta(e) \leq \delta_{\sigma}(e)\,\,\,\text{for any}\,\,\,
e\in E_x\big\}.
$$
The following definition gives the transition rates of the dynamics.
\begin{defi}\label{def1}
Let $(\eta,\sigma)$, $(\eta',\sigma')$ be two elements of $\{0,1\}^E\times\{-1,1\}^V$.
We consider several cases:

\noindent
$\bullet$ If there exists $x\in V$ such that
$\sigma'=\sigma^x$ and $\eta'(e)=\eta(e)$ for any $e\in E\setminus E_x$, then we define
$$
c((\eta,\sigma), (\eta',\sigma'))= (1-p)^{|\{e\in E_x,\,\, \eta'(e)=0\}|}p^{|\{e\in E_x,\,\,\, \eta'(e)=1\}|}\BBone_{(\eta',\sigma')\in \cC_x}\,.
$$

\noindent
$\bullet$ Otherwise, if
$(\eta,\sigma)\neq (\eta',\sigma')$, then we set
$c((\eta,\sigma), (\eta',\sigma'))= 0$.

\noindent
$\bullet$ Finally, if $(\eta,\sigma)= (\eta',\sigma')$, then we set
$$
c((\eta,\sigma), (\eta,\sigma))= -\sum_{(\eta',\sigma'),\, (\eta',\sigma')\neq (\eta,\sigma)}c((\eta,\sigma), (\eta',\sigma'))\,.
$$
\end{defi}
%
\begin{pro}\label{proreversible}
The transition rates introduced in definition \ref{def1}  satisfy the   detailed balance equation  (\ref{eqreversible}).
\end{pro}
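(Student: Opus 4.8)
The plan is to verify the detailed balance equation~(\ref{eqreversible}) for the rates of Definition~\ref{def1}. Since the only transitions with nonzero rate are those of the form $(\eta,\sigma)\to(\eta',\sigma^x)$ with $\eta'=\eta$ off $E_x$ (plus the diagonal, which is irrelevant for detailed balance), it suffices to fix a vertex $x\in V$, a spin configuration $\sigma$, and two edge configurations $\eta,\eta'$ agreeing on $E\setminus E_x$, and to check
\begin{equation*}
IP(\eta,\sigma)\,c\big((\eta,\sigma),(\eta',\sigma^x)\big)\,=\,IP(\eta',\sigma^x)\,c\big((\eta',\sigma^x),(\eta,\sigma)\big)\,.
\end{equation*}
Note the reverse transition $(\eta',\sigma^x)\to(\eta,\sigma)$ is indeed of the allowed type, since flipping $x$ again returns to $\sigma$ and $\eta=\eta'$ off $E_x$, so both rates are given by the formula in Definition~\ref{def1}.

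First I would expand both sides using~(\ref{ipsimply}) and split the edge product over $E_x$ and $E\setminus E_x$. On the left, $IP(\eta,\sigma)$ contributes $\frac1Z p^{a}(1-p)^{b}\BBone_{(\eta,\sigma)\in\cC}$ where $a,b$ count open/closed edges of $\eta$ over all of $E$; the rate contributes $(1-p)^{|\{e\in E_x:\eta'(e)=0\}|}p^{|\{e\in E_x:\eta'(e)=1\}|}\BBone_{(\eta',\sigma^x)\in\cC_x}$. The key observation is that the exponents recombine: the part of $IP(\eta,\sigma)$ coming from edges in $E\setminus E_x$ is symmetric in the exchange $(\eta,\sigma)\leftrightarrow(\eta',\sigma^x)$ because $\eta=\eta'$ and $\delta_\sigma=\delta_{\sigma^x}$ there (flipping $x$ does not change $\delta_\sigma(e)$ for $e\notin E_x$); meanwhile the $E_x$-part of $IP(\eta,\sigma)$ is $p^{|\{e\in E_x:\eta(e)=1\}|}(1-p)^{|\{e\in E_x:\eta(e)=0\}|}$ times compatibility indicators over $E_x$, and multiplying by the rate gives $p^{|\{e\in E_x:\eta(e)=1\}|+|\{e\in E_x:\eta'(e)=1\}|}(1-p)^{\cdots}$, an expression now symmetric in $\eta\leftrightarrow\eta'$. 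So the product of powers of $p$ and $1-p$ is the same on both sides.

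The remaining point, which is the main obstacle, is to reconcile the compatibility indicators. On the left we have $\BBone_{(\eta,\sigma)\in\cC}\,\BBone_{(\eta',\sigma^x)\in\cC_x}$; on the right, $\BBone_{(\eta',\sigma^x)\in\cC}\,\BBone_{(\eta,\sigma)\in\cC_x}$. I would argue these two products are equal. Indeed, $(\eta,\sigma)\in\cC$ means $\eta(e)\le\delta_\sigma(e)$ for all $e\in E$; this splits as $(\eta,\sigma)\in\cC_x$ (the constraint over $E_x$) and $\eta(e)\le\delta_\sigma(e)$ for $e\in E\setminus E_x$. Since $\eta=\eta'$ and $\delta_\sigma=\delta_{\sigma^x}$ on $E\setminus E_x$, the latter condition is identical to $\eta'(e)\le\delta_{\sigma^x}(e)$ for $e\in E\setminus E_x$. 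Hence $\BBone_{(\eta,\sigma)\in\cC}=\BBone_{(\eta,\sigma)\in\cC_x}\cdot\BBone_{\text{off-}E_x\text{ constraint}}$ and $\BBone_{(\eta',\sigma^x)\in\cC}=\BBone_{(\eta',\sigma^x)\in\cC_x}\cdot\BBone_{\text{off-}E_x\text{ constraint}}$ with the \emph{same} off-$E_x$ factor, so
\begin{equation*}
\BBone_{(\eta,\sigma)\in\cC}\,\BBone_{(\eta',\sigma^x)\in\cC_x}\;=\;\BBone_{(\eta,\sigma)\in\cC_x}\,\BBone_{(\eta',\sigma^x)\in\cC_x}\,\BBone_{\text{off-}E_x}\;=\;\BBone_{(\eta',\sigma^x)\in\cC}\,\BBone_{(\eta,\sigma)\in\cC_x}\,.
\end{equation*}
Combining the equality of the power factors with the equality of the indicator products, both sides of the detailed balance equation coincide (the factor $1/Z$ cancels), which completes the proof. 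I would close by remarking that when $(\eta,\sigma)=(\eta',\sigma')$ or when one of the configurations is not compatible, the equation holds trivially (both sides vanish, or it reduces to the diagonal case), so~(\ref{eqreversible}) holds in full generality.
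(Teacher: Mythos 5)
Your proof is correct and follows essentially the same route as the paper's: both reduce to transitions $(\eta,\sigma)\to(\eta',\sigma^x)$ with $\eta'=\eta$ off $E_x$, split the edge product over $E_x$ and $E\setminus E_x$, and exploit that $\eta=\eta'$ and $\delta_\sigma=\delta_{\sigma^x}$ on $E\setminus E_x$. The only difference is organizational: where the paper runs a three-way case analysis on whether $(\eta,\sigma)$ and $(\eta',\sigma^x)$ lie in $\cC_x$, you carry the compatibility indicators symbolically and show their product is symmetric under the exchange, which disposes of all cases at once.
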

\begin{proof}
Let $(\eta,\sigma)$ and $(\eta',\sigma')$ be fixed in $\{0,1\}^E\times \{-1,1\}^V$.
We have to prove (\ref{eqreversible}) only in the case where there exists $x\in V$ for which
\begin{equation}\label{vbn}
\sigma'=\sigma^x\,,\qquad \forall e\in E\setminus E_x\quad \eta'(e)= \eta(e)\,,
\end{equation}
because in
all the other cases,  the detailed balance equation (\ref{eqreversible}) is  trivially satisfied.
Let us fix $x\in V$.
Suppose first that $(\eta,\sigma)$ and $(\eta',\sigma')$ are two elements of $\cC_x$
	satisfying~(\ref{vbn}).
We get then, for $e\in E_x$,
$$\BBone_{\eta(e)=1}\delta_{\sigma}(e)= \BBone_{\eta(e)=1}\,,\qquad
\BBone_{\eta'(e)=1}\delta_{\sigma^x}(e)= \BBone_{\eta'(e)=1}\,,$$
therefore
\begin{eqnarray*}
 &&\kern-23pt\left(\prod_{e\in E_x}\left(p\BBone_{\eta(e)=1}\delta_{\sigma}(e)+ (1-p)\BBone_{\eta(e)=0} \right)\right)c((\eta,\sigma), (\eta',\sigma^x))\\
&&= p^{|\{e\in E_x,\,\,\, \eta(e)=1\}|}(1-p)^{|\{e\in E_x,\,\, \eta(e)=0\}|}(1-p)^{|\{e\in E_x,\,\, \eta'(e)=0\}|}p^{|\{e\in E_x,\,\,\, \eta'(e)=1\}|}\\
&&=\left(\prod_{e\in E_x}\left(p\BBone_{\eta'(e)=1}\delta_{\sigma^x}(e)+ (1-p)\BBone_{\eta'(e)=0} \right)\right)c((\eta',\sigma^x), (\eta,\sigma)).
\end{eqnarray*}
Now we  multiply each member of the last equality by
$$\prod_{e\in E\setminus E_x}\left(p\BBone_{\eta(e)=1}\delta_{\sigma}(e)+ (1-p)\BBone_{\eta(e)=0}\right)
\,=\,
\prod_{e\in E\setminus E_x}\left(p\BBone_{\eta'(e)=1}\delta_{\sigma^x}(e)+ (1-p)\BBone_{\eta'(e)=0}\right)
\,$$
and we obtain the detailed balance equation (\ref{eqreversible}).
Suppose now that $(\eta,\sigma)\in \cC_x$ and $(\eta',\sigma^x)\notin \cC_x$. We have then, by the definition of the transition rates,
$$
IP(\eta,\sigma)c((\eta,\sigma),(\eta',\sigma^x))=0\,.
$$
Moreover
$IP(\eta',\sigma^x)=0$ because
$(\eta',\sigma^x)\notin \cC$,
therefore $IP(\eta',\sigma^x)c((\eta',\sigma^x), (\eta,\sigma))=0$.
So here again equation (\ref{eqreversible}) is satisfied. Finally suppose that $(\eta,\sigma)\notin \cC_x$.
Then $IP(\eta,\sigma)=0$ and so $IP(\eta,\sigma)c((\eta,\sigma),(\eta',\sigma^x))=0$ which is equal to $IP(\eta',\sigma^x)c((\eta',\sigma^x),(\eta,\sigma))$, since  the transition rates
$c((\eta',\sigma^x),(\eta,\sigma))$ vanish as soon as $(\eta,\sigma)\notin \cC_x$.
We conclude that the detailed balance equation (\ref{eqreversible}) is always satisfied.
\end{proof}

\noindent We compute next the marginal dynamics on the spins.

\begin{pro}\label{pro1} For any $\eta$ in $\{0,1\}^E$ and any $\sigma,\sigma'$ in $\{-1,1\}^V$,
the sum
$$\sum_{\eta'\in \{0,1\}^E}c((\eta,\sigma), (\eta',\sigma'))$$
does not depend on $\eta$ and it is equal to
$$
c(\sigma,\sigma')
\,=\,\left\{
\begin{array}{cc}
(1-p)^{|\{e\in E_x,\,\, \delta_{\sigma}(e)=1\}|}\,\, & if \,\, \sigma'=\sigma^x\,, \\
-\sum_{x\in V}(1-p)^{|\{e\in E_x,\,\, \delta_{\sigma}(e)=1\}|}\,\, & if \,\, \sigma'=\sigma\,, \\
0 & otherwise\,.
\end{array}
\right.
$$
\end{pro}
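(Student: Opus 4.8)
The plan is to evaluate the sum $\sum_{\eta'\in\{0,1\}^E}c((\eta,\sigma),(\eta',\sigma'))$ by using the case analysis in definition~\ref{def1}. First I would observe that $c((\eta,\sigma),(\eta',\sigma'))=0$ unless there is a vertex $x\in V$ with $\sigma'=\sigma^x$ and $\eta'(e)=\eta(e)$ for every $e\in E\setminus E_x$; in particular, if $\sigma'$ differs from $\sigma$ in more than one spin the sum is $0$, which takes care of the ``otherwise'' line. So fix $x$ with $\sigma'=\sigma^x$. Then the only $\eta'$ contributing to the sum are those obtained from $\eta$ by freely re-assigning the values on the edges of $E_x$, so
$$
\sum_{\eta'\in\{0,1\}^E}c((\eta,\sigma),(\eta,\sigma^x))
=\sum_{\eta'':E_x\to\{0,1\}}(1-p)^{|\{e\in E_x:\eta''(e)=0\}|}p^{|\{e\in E_x:\eta''(e)=1\}|}\BBone_{(\eta',\sigma^x)\in\cC_x},
$$
where $\eta'$ agrees with $\eta$ off $E_x$ and with $\eta''$ on $E_x$. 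The key point is that whether $(\eta',\sigma^x)\in\cC_x$ depends only on the restriction $\eta''$ (and on $\sigma^x$ restricted to the vertices touching $E_x$), not on $\eta$ elsewhere — this is exactly why the sum will not depend on $\eta$.

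Next I would compute the restricted sum. The compatibility constraint $(\eta',\sigma^x)\in\cC_x$ means $\eta''(e)\le\delta_{\sigma^x}(e)$ for every $e\in E_x$: on an edge $e\in E_x$ with $\delta_{\sigma^x}(e)=0$ we are forced to take $\eta''(e)=0$, contributing a factor $(1-p)$, while on an edge $e\in E_x$ with $\delta_{\sigma^x}(e)=1$ the value $\eta''(e)$ is free and the two choices contribute $p+(1-p)=1$. Hence the sum factorizes over $E_x$ and equals
$$
\prod_{e\in E_x,\ \delta_{\sigma^x}(e)=0}(1-p)\;\cdot\prod_{e\in E_x,\ \delta_{\sigma^x}(e)=1}1
\;=\;(1-p)^{|\{e\in E_x:\ \delta_{\sigma^x}(e)=0\}|}.
$$
Finally I would note that for $e\in E_x$ with $e=\langle x,y\rangle$ one has $\delta_{\sigma^x}(e)=0\iff\delta_\sigma(e)=1$ (flipping the spin at $x$ toggles the agreement on every edge incident to $x$), so the exponent $|\{e\in E_x:\delta_{\sigma^x}(e)=0\}|$ equals $|\{e\in E_x:\delta_\sigma(e)=1\}|$, giving the announced value $(1-p)^{|\{e\in E_x:\ \delta_\sigma(e)=1\}|}$, manifestly independent of $\eta$. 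The diagonal case $\sigma'=\sigma$ follows from the last bullet of definition~\ref{def1}: $\sum_{\eta'}c((\eta,\sigma),(\eta',\sigma))=c((\eta,\sigma),(\eta,\sigma))=-\sum_{(\eta',\sigma')\neq(\eta,\sigma)}c((\eta,\sigma),(\eta',\sigma'))$, and grouping the terms on the right by the flipped vertex $x$ and applying the case just treated yields $-\sum_{x\in V}(1-p)^{|\{e\in E_x:\delta_\sigma(e)=1\}|}$.

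I do not expect any serious obstacle here; the argument is essentially bookkeeping. The one place to be careful is the ``does not depend on $\eta$'' claim: one must make explicit that the indicator $\BBone_{(\eta',\sigma^x)\in\cC_x}$, together with the power-of-$p$ weight, involves $\eta'$ only through its values on $E_x$, so that after summing over those values the dependence on $\eta|_{E\setminus E_x}$ drops out entirely. Everything else — the factorization over edges of $E_x$, and the spin-flip identity $\delta_{\sigma^x}(e)=1-\delta_\sigma(e)$ for $e\in E_x$ — is routine, and the diagonal entry is then forced by the normalization in definition~\ref{def1}.
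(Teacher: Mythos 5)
Your proof is correct and follows essentially the same route as the paper: reduce to the case $\sigma'=\sigma^x$, sum the weights $p^{\#\{\eta'=1\}}(1-p)^{\#\{\eta'=0\}}\BBone_{(\eta',\sigma^x)\in\cC_x}$ over the free values of $\eta'$ on $E_x$ by factorizing edge by edge (forced closed edge contributing $1-p$ when $\delta_{\sigma^x}(e)=0$, factor $p+(1-p)=1$ otherwise), use $\delta_{\sigma^x}(e)=1-\delta_\sigma(e)$ on $E_x$, and obtain the diagonal entry from the normalization; the paper merely compresses the middle computation by invoking the same argument as for its equation~\eqref{transf}, which you have usefully made explicit.
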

\begin{proof}
Let $\eta\in \{0,1\}^E$ be fixed. If $|\{y\in V, \sigma(y)\neq \sigma'(y)\}|\geq 2$ then, by definition \ref{def1},
\begin{equation}\label{a1}
\sum_{\eta'\in \{0,1\}^E}c((\eta,\sigma), (\eta',\sigma'))=0.
\end{equation}
Suppose now that there exists $x\in V$ such that $\sigma'=\sigma^x$. Then, by definition \ref{def1},
\begin{multline*}
\sum_{\eta'\in \{0,1\}^E}c((\eta,\sigma), (\eta',\sigma'))\,=\,\\
 \sum_{\eta'\in \{0,1\}^E,\,\, \eta'\equiv \eta\, on \, E\setminus E_x}(1-p)^{|\{e\in E_x,\,\, \eta'(e)=0\}|}p^{|\{e\in E_x,\,\,\, \eta'(e)=1\}|}\BBone_{(\eta',\sigma^x)\in \cC_x}.\\
\end{multline*}
Our next task is to calculate the last sum.
To this end, we use the same proof as for (\ref{transf}) (recall that $IP(\eta,\sigma)$ is also given by (\ref{ipsimply})). We get 
%
\begin{equation}\label{a2}
\sum_{\eta'\in \{0,1\}^E}c((\eta,\sigma), (\eta',\sigma^x))= (1-p)^{|\{e\in E_x,\,\, \delta_{\sigma}(e)=1\}|}.
\end{equation}
Finally, we have to calculate $
\sum_{\eta'\in \{0,1\}^E}c((\eta,\sigma), (\eta',\sigma))$. We have, from definition \ref{def1},
\begin{align}\label{a3}
 \sum_{\eta'\in \{0,1\}^E}c((\eta,\sigma),& (\eta',\sigma)) = c((\eta,\sigma), (\eta,\sigma)) {\nonumber} \\
&= -\sum_{(\eta',\sigma'),\, (\eta',\sigma')\neq (\eta,\sigma)}c((\eta,\sigma), (\eta',\sigma')) {\nonumber} \\
&= -\sum_{\eta'\in \{0,1\}^E}\sum_{x\in V}c((\eta,\sigma), (\eta',\sigma^x)){\nonumber} \\
&=-\sum_{x\in V}\sum_{\eta'\in \{0,1\}^E}c((\eta,\sigma), (\eta',\sigma^x)){\nonumber} \\
&=-\sum_{x\in V}(1-p)^{|\{e\in E_x,\,\, \delta_{\sigma}(e)=1\}|}.
\end{align}
Equalities (\ref{a1}), (\ref{a2}) and (\ref{a3}) yield the statement of
proposition \ref{pro1}.
\end{proof}

\noindent We compute next the marginal dynamics on the edges.
\begin{pro}\label{tro2}
 Let $\eta$ and $\eta'$ be two different elements of $\{0,1\}^E$.

\noindent
$\bullet$ If there exists $e=\langle x,y\rangle\in E$ such that $\eta'(e)\neq \eta(e)$ and $\eta'\equiv\eta\,\, {\mbox{on}}\,\, E\setminus \{e\}$, then
$$
\sum_{\sigma'\in \{-1,1\}^V}c((\eta,\sigma), (\eta',\sigma'))=\sum_{z\in \{x,y\}}(1-p)^{|\{e\in E_z,\,\, \eta'(e)=0\}|}p^{|\{e\in E_z,\,\, \eta'(e)=1\}|}\BBone_{(\eta',\sigma^z)\in \cC_z}\,.
$$
\noindent
$\bullet$
If there exists $x\in V$ such that $\eta'\equiv \eta$  on $E\setminus E_x$ and the configurations $\eta,\eta'$ differ in more than one edge, then
$$
\sum_{\sigma'\in \{-1,1\}^V}c((\eta,\sigma), (\eta',\sigma'))=(1-p)^{|\{e\in E_x,\,\, \eta'(e)=0\}|}p^{|\{e\in E_x,\,\, \eta'(e)=1\}|}
\BBone_{(\eta',\sigma^x)\in \cC_x}\,.
$$
\noindent
$\bullet$
If for any $x \in V$, $\eta'\neq \eta\,\, {\mbox{on}}\,\, E\setminus E_x$, then
$$
\sum_{\sigma'\in \{-1,1\}^V}c((\eta,\sigma), (\eta',\sigma'))=0\,.
$$
\end{pro}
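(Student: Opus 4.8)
The plan is to fix two distinct configurations $\eta,\eta'\in\{0,1\}^E$ and to describe exactly which targets $(\eta',\sigma')$ are reachable from $(\eta,\sigma)$ at positive rate. Set $D=\{e\in E:\eta(e)\neq\eta'(e)\}$, which is nonempty since $\eta\neq\eta'$. By definition~\ref{def1}, and since $(\eta',\sigma')\neq(\eta,\sigma)$, the rate $c((\eta,\sigma),(\eta',\sigma'))$ can be nonzero only if there is a vertex $z$ with $\sigma'=\sigma^z$ and $\eta'(e)=\eta(e)$ for every $e\in E\setminus E_z$, i.e.\ $D\subseteq E_z$; in particular such a $\sigma'$ differs from $\sigma$ at exactly one site, and that site $z$ is then determined by $\sigma'$. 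As $z\mapsto\sigma^z$ is injective, the contributing targets $\sigma^z$ are pairwise distinct, so
$$\sum_{\sigma'\in\{-1,1\}^V}c((\eta,\sigma),(\eta',\sigma'))=\sum_{z\in V:\,D\subseteq E_z}(1-p)^{|\{e\in E_z:\,\eta'(e)=0\}|}\,p^{|\{e\in E_z:\,\eta'(e)=1\}|}\,\BBone_{(\eta',\sigma^z)\in\cC_z}\,.$$
Everything then reduces to describing the set $Z_D=\{z\in V:\,D\subseteq E_z\}$ of vertices incident to every edge of $D$.

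The one genuinely combinatorial point is the following. If $D=\{e\}$ with $e=\langle x,y\rangle$, then $Z_D=\{x,y\}$, and the displayed sum is precisely the expression in the first bullet. If $|D|\geq2$, then $Z_D$ has at most one element: two distinct vertices $z,w$ lying on every edge of $D$ would force each edge of $D$ to be the pair $\{z,w\}$, contradicting $|D|\geq2$. Hence, when $|D|\geq2$, there are exactly two possibilities: either $Z_D=\{x\}$ for a necessarily unique vertex $x$, which is exactly the hypothesis of the second bullet and makes the sum collapse to its single term; or $Z_D=\emptyset$, which says precisely that $\eta'$ and $\eta$ disagree somewhere in $E\setminus E_x$ for every $x\in V$ — the hypothesis of the third bullet — and then the sum is empty, hence zero.

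To finish I would observe that, for a fixed pair $\eta\neq\eta'$, the three alternatives ``$|D|=1$'', ``$|D|\geq2$ and $Z_D\neq\emptyset$'', ``$Z_D=\emptyset$'' are exhaustive and mutually exclusive, so the three bullets of the proposition cover all cases; and the uniqueness of $x$ needed in the second bullet is exactly the bound $|Z_D|\leq1$ established above. Substituting the rate formula of definition~\ref{def1} into the reduced sums then yields the three displayed identities verbatim. Apart from the short incidence argument, every step is a direct unwinding of the definitions, so I do not expect any real obstacle here.
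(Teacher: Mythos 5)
Your proof is correct and follows essentially the same route as the paper's: your sets $D$ and $Z_D$ are exactly the paper's ${\cal E}=\{f\in E:\eta(f)\neq\eta'(f)\}$ and ${\cal V}=\{y\in V:\eta\equiv\eta'\text{ on }E\setminus E_y\}$, and your incidence argument ($|D|=1$ gives the two endpoints, $|D|\geq 2$ forces $|Z_D|\leq 1$) is the same combinatorial step the paper uses to split into the three cases. No gaps.
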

\begin{proof}
Let $\eta$ and $\eta'$ be two fixed different elements of $\{0,1\}^E$.
If for any $x\in V$, $\eta$ and $\eta'$ are different on $E\setminus E_x$ then we get, by definition \ref{def1},
$$
\forall\,\, \sigma'\in \{-1,1\}^V\qquad
c((\eta,\sigma),(\eta',\sigma'))=0\,.
$$
Consequently,
\begin{equation}\label{e1tro2}
\sum_{\sigma'\in \{-1,1\}^V}c((\eta,\sigma),(\eta',\sigma'))=0.
\end{equation}
Suppose now that there exists $x\in V$ such that $\eta\equiv \eta'$ on $E\setminus E_x$. Let us define
the set
$${\cal V}\,=\,\big\{\,y\in V,\,\, \eta\equiv \eta'\,\, on\,\, E\setminus E_y\,\big\}\,.$$
We have
\begin{equation}\label{epro4}
\sum_{\sigma'\in \{-1,1\}^V}c((\eta,\sigma),(\eta',\sigma'))= \sum_{y\in {\cal V}}c((\eta,\sigma),(\eta',\sigma^y)).
\end{equation}
Let us set
$${\cal E}\,=\,\big\{f\in E,\,\, \eta(f)\neq \eta'(f)\,\big\}\,.$$
By hypothesis, this set is not empty and it is a subset of $E_x$ because $\eta\neq \eta'$ while $\eta\equiv \eta'$ on $E\setminus E_x$. Suppose first that ${\cal E}$ contains only one element,
 say ${\cal E}=\{e\}$.
Necessarily one endvertex of $e$ is $x$. Let $y$ be the other endvertex of $e$, so that $e=\langle x,y\rangle$.
We have then
${\cal V}=\{x,y\}$. Indeed, if there exists a vertex $z$ in $V$ such that
$$
z\neq x\,\,\,\text{and}\,\, \eta\equiv \eta'\,\,\text{on}\,\, E\setminus E_z\,,
$$
then, since $e\in E_x$ and $\eta(e)\neq \eta'(e)$,
we have also $e\in E_z$, whence $e=\langle x,z\rangle$ and
$z=y$.
Equation (\ref{epro4}) can now be rewritten as
\begin{equation}\label{e2tro2}
\sum_{\sigma'\in \{-1,1\}^V}c((\eta,\sigma),(\eta',\sigma'))= c((\eta,\sigma),(\eta',\sigma^x))+ c((\eta,\sigma),(\eta',\sigma^y))\,.
\end{equation}
Suppose that ${\cal E}$ contains more than one element.
We claim that, in this case, the set ${\cal V}$ is reduced to $\{x\}$. Indeed, let $e_1$ and $e_2$ be two different elements of ${\cal E}$. Let $y\in {\cal V}$. Since
$\eta\equiv \eta'\,\,\text{on}\,\, E\setminus E_y$, necessarily both $e_1$ and $e_2$ belong to $E_y$.
We know also that both $e_1$ and $e_2$ belong to $E_x$.
Consequently $e_1\in E_x\cap E_y$ and $e_2\in E_x\cap E_y$. It follows that $y=x$ since $e_1$ and $e_2$ are different.
From (\ref{epro4}),
we deduce
that,
whenever ${\cal E}$ contains more than one element,
\begin{equation}\label{e3tro2}
\sum_{\sigma'\in \{-1,1\}^V}c((\eta,\sigma),(\eta',\sigma'))= c((\eta,\sigma),(\eta',\sigma^x))\,.
\end{equation}
Definition \ref{def1} and the three cases of equations~(\ref{e1tro2}), (\ref{e2tro2}) and (\ref{e3tro2})
complete the
proof of Proposition \ref{tro2}.
\end{proof}
\noindent From propositions  \ref{proreversible}, \ref{pro1} and \ref{tro2}, we deduce the following theorem.
\begin{theo}\label{theo2}
Let $(\eta_t,\sigma_t)_{t\geq 0}$ be a  Markov jump process with the transition rates
introduced in definition \ref{def1}. Then $(\eta_t,\sigma_t)_{t\geq 0}$ is reversible with respect to the coupling measure IP.
Its second marginal process
 $(\sigma_t)_{t\geq 0}$ is a Markov process evolving according to a Glauber dynamics while its first marginal $(\eta_t)_{t\geq 0}$ is a non-Markovian jump process.
\end{theo}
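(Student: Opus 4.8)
The plan is to assemble Theorem~\ref{theo2} directly from the three propositions just proved, together with the Ball--Yeo characterization of when a marginal of a Markov jump process is itself Markovian. There are essentially three assertions to verify: reversibility with respect to $IP$, the claim that $(\sigma_t)_{t\geq 0}$ is a Glauber dynamics, and the claim that $(\eta_t)_{t\geq 0}$ is not Markovian.

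First I would dispose of reversibility: this is exactly Proposition~\ref{proreversible}, which says the rates of Definition~\ref{def1} satisfy the detailed balance equation~(\ref{eqreversible}), and detailed balance with respect to $IP$ is equivalent to reversibility with respect to $IP$. Next, for the spin marginal, I would invoke Theorem~3.1 of Ball and Yeo (theorem~\ref{theoball} in the appendix): the second marginal $(\sigma_t)_{t\geq 0}$ is a Markov jump process provided that for every $\eta$ the sum $\sum_{\eta'} c((\eta,\sigma),(\eta',\sigma'))$ does not depend on $\eta$. Proposition~\ref{pro1} establishes precisely this, and moreover identifies the common value as the rate $c(\sigma,\sigma')$ given there. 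It then remains to observe that these rates are of Glauber type — they vanish unless $\sigma$ and $\sigma'$ differ in exactly one spin, they are strictly positive for every single spin flip (since $p<1$ forces $(1-p)^{|\{e\in E_x:\delta_\sigma(e)=1\}|}>0$), and the diagonal entry is the negative of the sum of the off-diagonal ones — so conditions~(\ref{tauxglauber}) hold; detailed balance of these rates with respect to $\mu_\beta$ follows from the reversibility of the joint process (or can be checked directly as in~(\ref{marginal})). Hence $(\sigma_t)_{t\geq 0}$ evolves according to a Glauber dynamics with the rates~(\ref{GD}).

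For the edge marginal I would argue by contradiction using Theorem~3.1 of~\cite{Ball} in the other direction: if $(\eta_t)_{t\geq 0}$ were Markovian, then for every $\eta,\eta'$ the sum $\sum_{\sigma'} c((\eta,\sigma),(\eta',\sigma'))$ would not depend on $\sigma$. Proposition~\ref{tro2} computes this sum in all cases, and in the case where $\eta'=\eta^e$ for a single edge $e=\langle x,y\rangle$ it equals $\sum_{z\in\{x,y\}}(1-p)^{|\{f\in E_z:\eta'(f)=0\}|}p^{|\{f\in E_z:\eta'(f)=1\}|}\BBone_{(\eta',\sigma^z)\in\cC_z}$, whose value genuinely depends on $\sigma$ through the indicators $\BBone_{(\eta',\sigma^z)\in\cC_z}$ — and since $E\neq\emptyset$ one can exhibit a concrete $\eta$, $e$, and two spin configurations $\sigma$ realizing different values (for instance take $\eta\equiv 0$, so $\gamma_\eta(e)=0$, and compare a $\sigma$ with $\delta_\sigma(e)=1$ against one with $\delta_\sigma(e)=0$, noting that reversing the spin at $z=x$ or $z=y$ changes compatibility). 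This contradicts Ball--Yeo, so $(\eta_t)_{t\geq 0}$ is not a Markov jump process.

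The main obstacle — more a point needing care than a genuine difficulty — is the non-Markovianity of the edge marginal: one must actually produce, using $E\neq\emptyset$, a pair $\eta,\eta'$ differing in one edge and two spin configurations $\sigma$ giving distinct values of the relevant sum, rather than merely observing that the formula contains $\sigma$-dependent indicators. I would present this witness explicitly and then cite theorem~\ref{theoball} to conclude. Everything else is a direct quotation of Propositions~\ref{proreversible}, \ref{pro1}, \ref{tro2} and a verification that the spin rates of Proposition~\ref{pro1} satisfy~(\ref{tauxglauber}), which is immediate from $0\le p<1$ (the boundary case $p=1$, where the Glauber rates degenerate, being excluded by the hypotheses under which~(\ref{GD}) was introduced).
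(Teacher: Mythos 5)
Your proposal is correct and follows essentially the same route as the paper: reversibility from Proposition~\ref{proreversible}, the Glauber marginal from Proposition~\ref{pro1} via the Ball--Yeo criterion (theorem~\ref{theoball}), and non-Markovianity of the edge marginal from the $\sigma$-dependence of the sum in Proposition~\ref{tro2}. Your only addition is to exhibit an explicit witness ($\eta\equiv 0$, $\eta'=\eta^e$, comparing $\delta_\sigma(e)=1$ with $\delta_\sigma(e)=0$) for that $\sigma$-dependence, which is a valid and slightly more careful version of the paper's appeal to the indicator $\BBone_{(\eta',\sigma^z)\in\cC_z}$; the passing mention of $\gamma_\eta(e)$ is irrelevant to Definition~\ref{def1} but harmless.
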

\begin{proof}
The reversibility property is deduced from Proposition  \ref{proreversible}. Proposition \ref{pro1}
shows that the sum 
$$\sum_{\eta'\in \{0,1\}^E}c((\eta,\sigma), (\eta',\sigma'))$$ 
does not depend on $\eta$ for any $\sigma,\sigma'\in \{-1,1\}^V$,
moreover this sum is equal to the  transition rates given in (\ref{marginal}).
The presence of the indicator function
$\BBone_{(\eta',\sigma^x)\in \cC_x}$ in the formula of
proposition \ref{tro2} shows that the sum $\sum_{\sigma'\in \{-1,1\}^V}c((\eta,\sigma), (\eta',\sigma'))$ depends on $\sigma$ for some $\eta,\eta'\in \{0,1\}^E$.
The proof of Theorem \ref{theo2} is complete thanks to Ball and Yeo \cite{Ball}.
\end{proof}
So this second dynamics does not fulfill our dream. 
\section{The dream is not feasible}\label{general}
A Markov process on $\{0,1\}^E\times \{-1,1\}^V$ is said to be a Markovian coupling if both its  marginals  are Markov processes.
In this section, we will work with the following additional hypothesis on the graph $(V,E)$.
\begin{hypo}\label{adhy}
We suppose that there exists a vertex $\bx\in V$ which belongs to at least
four edges. We suppose that there exists a spin configuration $\bsi$ such that 
$\delta_{\bsi}(e)=0$ for any $e\in  E$.
\end{hypo}
\noindent
For instance, when the graph is bipartite, the spin configuration $\bsi$ can be obtained by
setting minuses on one part of the graph and pluses on the other part.
Theorem \ref{theo1} below shows that, under hypothesis~\ref{adhy}, 
we cannot build a Markovian coupling which is reversible with respect to the $IP$ measure
and whose marginal on the spins 
is a one spin flip Markov jump process, i.e., 
with transition rates satisfying (\ref{tauxglauber}).
\begin{theo}\label{theo1}
Suppose that hypothesis~\ref{adhy} holds.
Let $(\eta_t,\sigma_t)_{t\geq 0}$ be a Markovian jump process with
rates $q((\eta,\sigma), (\eta',\sigma'))$  satisfying (\ref{qmatrix1}), (\ref{qmatrix})
and the detailed balance equation (\ref{eqreversible}). If the second marginal $(\sigma_t)_{t\geq 0}$
is a Markov jump process with transition rates satisfying (\ref{tauxglauber}),
then the first marginal $(\eta_t)_{t\geq 0}$ is a continuous-time non-Markovian jump process.
\end{theo}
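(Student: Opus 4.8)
The plan is to argue by contradiction: assume that, besides $(\sigma_t)_{t\ge0}$, the edge marginal $(\eta_t)_{t\ge0}$ is also Markovian. Then Ball and Yeo's criterion (theorem~\ref{theoball}) supplies two lumpability identities for the joint rates $q$, one for each marginal, and I would confront these with the detailed balance equation~(\ref{eqreversible}), with the fact that $IP$ charges exactly the set $\cC$ of compatible configurations, and with the geometry of hypothesis~\ref{adhy}, until I produce a single ordered pair of edge configurations whose alleged marginal jump rate has to equal two different values at once.

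First I would record two elementary consequences of the hypotheses. (a) Since $(\sigma_t)_{t\ge0}$ is a Markov jump process with rates $c$ obeying~(\ref{tauxglauber}), theorem~\ref{theoball} gives $\sum_{\eta'\in\{0,1\}^E}q((\eta,\sigma),(\eta',\sigma'))=c(\sigma,\sigma')$ for every $\eta$; the terms of this sum are nonnegative when $\sigma\neq\sigma'$ by~(\ref{qmatrix1}), and $c(\sigma,\sigma')=0$ as soon as $\sigma,\sigma'$ disagree at two or more vertices, so each such term vanishes, that is, $q((\eta,\sigma),(\eta',\sigma'))=0$ whenever $\sigma,\sigma'$ disagree at two or more vertices. (b) Rewriting~(\ref{eqreversible}) with the help of~(\ref{ipsimply}) and using that $IP>0$ exactly on $\cC$, we get $q((\eta,\sigma),(\eta',\sigma'))=0$ whenever $(\eta,\sigma)\in\cC$ but $(\eta',\sigma')\notin\cC$. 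Finally, under the contradiction hypothesis, theorem~\ref{theoball} applied to the edge marginal furnishes rates $\tilde c$ with $\sum_{\sigma'\in\{-1,1\}^V}q((\eta,\sigma),(\eta',\sigma'))=\tilde c(\eta,\eta')$, independent of $\sigma$.

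Now the combinatorial core. Let $\bx,\bsi$ be as in hypothesis~\ref{adhy}, write $E_{\bx}=\{\langle\bx,y_1\rangle,\dots,\langle\bx,y_m\rangle\}$ with $m\ge4$ and the neighbours $y_1,\dots,y_m$ pairwise distinct, let $\mathbf{0}$ be the all-closed edge configuration and $\mathbf{1}_{E_{\bx}}$ the configuration in which exactly the edges of $E_{\bx}$ are open. Because $\delta_{\bsi}\equiv0$, the only edge configuration compatible with $\bsi$ is $\mathbf{0}$; because flipping $\bx$ satisfies every edge of $E_{\bx}$ and leaves every other edge frustrated, $\delta_{\bsi^{\bx}}$ equals $1$ on $E_{\bx}$ and $0$ off it, so both $(\mathbf{0},\bsi)$ and $(\mathbf{1}_{E_{\bx}},\bsi^{\bx})$ lie in $\cC$. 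I claim that $q((\mathbf{1}_{E_{\bx}},\bsi^{\bx}),(\mathbf{0},\bsi))>0$. Indeed $\bsi=(\bsi^{\bx})^{\bx}$, so $c(\bsi^{\bx},\bsi)>0$ by~(\ref{tauxglauber}); by the lumpability of the spin marginal, $c(\bsi^{\bx},\bsi)=\sum_{\eta'}q((\mathbf{1}_{E_{\bx}},\bsi^{\bx}),(\eta',\bsi))$, and by (b) the only summand that can be nonzero is the one with $\eta'=\mathbf{0}$, which therefore equals $c(\bsi^{\bx},\bsi)>0$. Detailed balance~(\ref{eqreversible}) then forces $q((\mathbf{0},\bsi),(\mathbf{1}_{E_{\bx}},\bsi^{\bx}))>0$, and evaluating the edge-marginal lumpability identity at $(\eta,\sigma)=(\mathbf{0},\bsi)$ yields $\tilde c(\mathbf{0},\mathbf{1}_{E_{\bx}})\ge q((\mathbf{0},\bsi),(\mathbf{1}_{E_{\bx}},\bsi^{\bx}))>0$.

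To close the argument I would evaluate the very same rate $\tilde c(\mathbf{0},\mathbf{1}_{E_{\bx}})$ at $(\eta,\sigma)=(\mathbf{0},\bsi^{y_1})$ and show it vanishes. By (a), a summand $q((\mathbf{0},\bsi^{y_1}),(\mathbf{1}_{E_{\bx}},\sigma'))$ can be nonzero only if $\sigma'$ disagrees with $\bsi^{y_1}$ at at most one vertex, and by (b) it vanishes unless $(\mathbf{1}_{E_{\bx}},\sigma')\in\cC$, i.e.\ unless $\delta_{\sigma'}(e)=1$ for every $e\in E_{\bx}$. If $\sigma'$ is obtained from $\bsi$ by flipping a set $S$ of vertices, then $\delta_{\sigma'}(\langle\bx,y_i\rangle)=1$ exactly when $|S\cap\{\bx,y_i\}|$ is odd. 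The only eligible $\sigma'$ correspond to $S=\{y_1\}$, $S=\emptyset$, or $S=\{y_1,x\}$ with $x\in V\setminus\{y_1\}$; the first two already fail at the edge $\langle\bx,y_2\rangle$, while the third family would require both $x\notin\{\bx,y_1\}$ (to pass $\langle\bx,y_1\rangle$) and $x\in\{\bx,y_i\}$ for every $i\ge2$ (to pass $\langle\bx,y_i\rangle$); as $m\ge3$, the demands $x\in\{\bx,y_2\}$ and $x\in\{\bx,y_3\}$ leave only $x=\bx$, contradicting $x\notin\{\bx,y_1\}$. Hence every summand is $0$ and $\tilde c(\mathbf{0},\mathbf{1}_{E_{\bx}})=0$, contradicting the preceding paragraph; therefore $(\eta_t)_{t\ge0}$ cannot be Markovian, and being a marginal of a finite-state continuous-time jump process it is itself a continuous-time jump process, hence a non-Markovian one. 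The delicate point is exactly this last case analysis: one must be certain that from the state $(\mathbf{0},\bsi^{y_1})$ no admissible transition at all --- no edges-only move, no ``flip-back'' move $\sigma'=\bsi$, and no single spin flip --- can open precisely the edges of $E_{\bx}$, and this is where the abundance of edges through $\bx$ is genuinely used (in fact the argument only needs $\bx$ to lie on at least three edges); everything else is bookkeeping with the support of $IP$ and the two Ball--Yeo lumpability identities.
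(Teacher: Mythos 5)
Your proof is correct, and its first half is identical to the paper's: you establish, exactly as the authors do, that the detailed balance equation confines the rates to $\cC$, that the spin--marginal lumpability identity applied at $(\bet^{\bx},\bsi^{\bx})$ forces $q((\bet^{\bx},\bsi^{\bx}),(\bet,\bsi))=c(\bsi^{\bx},\bsi)>0$, and hence via reversibility that ${\tilde c}(\bet,\bet^{\bx})>0$. Where you diverge is in the endgame. The paper evaluates the edge--marginal identity at a spin configuration $\hsi$ whose restriction to the neighbours of $\bx$ contains at least two pluses and two minuses, extracts some $\sigma'$ with $q((\bet,\hsi),(\bet^{\bx},\sigma'))>0$, observes that compatibility with $\bet^{\bx}$ forces $\sigma'$ to differ from $\hsi$ in at least two vertices, and lands the contradiction on the spin side, against $c(\hsi,\sigma')=0$ from~(\ref{tauxglauber}). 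You instead evaluate the same edge rate at $(\bet,\bsi^{y_1})$ and show by direct enumeration of the admissible flip sets $S\in\{\emptyset,\{y_1\},\{y_1,x\}\}$ that every summand vanishes, so the contradiction lands on the edge side, ${\tilde c}(\bet,\bet^{\bx})=0$ against ${\tilde c}(\bet,\bet^{\bx})>0$. The two closings encode the same obstruction (a single spin flip cannot reconcile a frustrated neighbourhood of $\bx$ with all of $E_{\bx}$ being open), but yours is more economical: your case analysis goes through as soon as $\bx$ lies on three pairwise distinct edges, whereas the paper's choice of $\hsi$ genuinely needs four, so your variant proves the theorem under a slightly weaker form of hypothesis~\ref{adhy}. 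Your parenthetical remark on this point is accurate, and the concluding observation that the marginal of a finite-state jump process is itself a jump process correctly disposes of the last clause of the statement.
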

\begin{proof}
Let $(\eta_t,\sigma_t)_{t\geq 0}$ be a Markovian jump process with transition
rates $q((\eta,\sigma), (\eta',\sigma'))$  satisfying (\ref{qmatrix1}), (\ref{qmatrix})
and the detailed
balance equation (\ref{eqreversible}). 
	Suppose also that
the first marginal $(\eta_t)_{t\geq 0}$
	is a Markov jump process with transition rates ${\tilde c}(\eta,\eta')$.
	Suppose also that
the second marginal $(\sigma_t)_{t\geq 0}$
is a Markov jump process with transition rates $c(\sigma,\sigma')$ satisfying (\ref{tauxglauber}).
From Ball and Yeo \cite{Ball} (see theorem \ref{theoball} of the appendix), we have 
	\begin{equation}
		\label{mar1}
	\forall \eta,\eta'
\in \{0,1\}^E\qquad
	\forall \sigma
\in \{-1,1\}^V\qquad
		\sum_{\sigma'\in \{-1,1\}^V} 
	q((\eta,\sigma),(\eta',\sigma'))= {\tilde c}(\eta,\eta')\,,
	\end{equation}
	\begin{equation}
		\label{mar2}
	\forall \sigma,\sigma'
\in \{-1,1\}^V\qquad
	\forall \eta
\in \{0,1\}^E\qquad
		\sum_{\eta'\in \{0,1\}^E} q((\eta,\sigma),(\eta',\sigma'))= {c}(\sigma,\sigma')\,.
\end{equation}
Let $\bsi$ be a configuration as in hypothesis~\ref{adhy} and let $\bx$ be a vertex of $E$ belonging
to at least four distinct edges.
Let $\bet$ be the configuration where all the edges are closed and let
	$\bet^{\bx}$ be the configuration where the edges exiting from $\bx$ are opened,
	while all the other edges are closed.
	From formula~\eqref{mar2} applied to $\bet,\bsi$ and $\bsi^{\bx}$, we have
	\begin{equation*}
		\sum_{\eta'\in \{0,1\}^E} q((\bet^\bx,\bsi^\bx),(\eta',\bsi))= 
		{c}(\bsi^\bx,\bsi)\,.
\end{equation*}
The only configuration $\eta'$ compatible with $\bsi$ is $\bet$, thus
the above identity reduces to
		$$q((\bet^\bx,\bsi^\bx),(\bet,\bsi))= 
		{c}(\bsi^\bx,\bsi)\,.$$
The conditions~\eqref{tauxglauber} implies that
		${c}(\bsi^\bx,\bsi)$ is positive, 
		therefore
		$q((\bet^\bx,\bsi^\bx),(\bet,\bsi))$ is positive. 
		Since we assumed that the dynamics is reversible, then the
		rate
		$q((\bet,\bsi),(\bet^\bx,\bsi^\bx))$ is also positive. 
	From formula~\eqref{mar1} applied to $\bet,\bet^\bx$ and $\bsi$, we have
	\begin{equation*}
		\sum_{\sigma'\in \{-1,1\}^V} 
	q((\bet,\bsi),(\bet^\bx,\sigma'))= {\tilde c}(\bet,\bet^\bx)\,,
\end{equation*}
therefore the rate
	${\tilde c}(\bet,\bet^\bx)$ is positive.
Let now $\hsi$
	be a spin configuration in which the spins of the
	vertices connected to $\bx$ contain at least two negative spins and at least two
	positive spins. This is possible because we assumed that $\bx$ is connected to 
	at least four distinct vertices.
	Applying again formula~\eqref{mar1}, this time to
$\bet,\bet^\bx$ and $\hsi$, we have
	\begin{equation*}
		\sum_{\sigma'\in \{-1,1\}^V} 
	q((\bet,\hsi),(\bet^\bx,\sigma'))= {\tilde c}(\bet,\bet^\bx)\,.
\end{equation*}
Since the rate
	${\tilde c}(\bet,\bet^\bx)$ is positive, then there exists 
	a spin configuration $\sigma'$ such that
	$$q((\bet,\hsi),(\bet^\bx,\sigma'))\,>\,0\,.$$
In the configuration $\bet^\bx$, all the edges emanating from $\bx$ are opened, 
hence the neighbours of $\bx$ are connected. Since
the configuration $\sigma'$ has to be compatible with $\bet^\bx$, then
all the 
neighbours of $\bx$ 
have the same spin in $\sigma'$, hence
the configuration $\sigma'$ has to differ from $\hsi$ in at least two vertices.
From 
the conditions~\eqref{tauxglauber}, we should therefore have that
$c(\hsi,\sigma')=0$.
Yet formula~\eqref{mar2} applied to $\bet,\bsi$ and $\bsi^{\bx}$ yields
	\begin{equation*}
		{c}(\hsi,\sigma')\,=\,
		\sum_{\eta'\in \{0,1\}^E} q((\bet,\hsi),(\eta',\sigma')) \,\geq\,
	q((\bet,\hsi),(\bet^\bx,\sigma'))\,>\,0\,,
\end{equation*}
which is contradictory.
\end{proof}
\noindent
Notice that hypothesis~\ref{adhy} holds for a cubic box on the $d$--dimensional 
lattice
when $d\geq 2$. Thus our dream process is not realizable in the lattice 
$\mathbb{Z}^d$ for $d\geq 2$.
\section{One edge and one incident cluster}
\label{three}
We drop here the requirement that the dynamics is local.
We build a dynamics which updates at each time
at most
one edge and the vertices belonging to the open cluster of one of the endpoints of the edge.
This dynamics will be
reversible with respect to the coupling measure $IP$
and its marginal on the edges is the FK dynamics described in 
section~\ref{fkdyna}.
For $x\in V$, recall that
$E_x$ is the set of the edges $e$ in $E$ having $x$ as endvertex, i.e.,
$$
E_x=\{e\in E,\, e=\langle x,y\rangle,\, y\in V\}\,.
$$
The following definition gives the transition rates of the dynamics.
\begin{defi}\label{def7}
Let $(\eta,\sigma)$, $(\eta',\sigma')$ be two elements of $\{0,1\}^E\times\{-1,1\}^V$.
We consider several cases:

\noindent
$\bullet$ If $\sigma'=\sigma$ and
	there exists $e\in E$ such that $\eta'=\eta^e$ and $\gamma_\eta(e)=1$, then we 
	set
$$
	c((\eta,\sigma), (\eta',\sigma'))= 
	\big(
	(1-p)\BBone_{\eta(e)=1}+
	p\BBone_{\eta(e)=0}\big)
\BBone_{(\eta,\sigma)\in \cC}
	\,.
$$

\noindent
$\bullet$ If $\sigma'=\sigma$ and
	there exists $e\in E$ such that $\eta'=\eta^e$, $\gamma_\eta(e)=0$ and
	$\delta_\sigma(e)=1$, then we set
$$
	c((\eta,\sigma), (\eta',\sigma'))\,= \,
	\frac{1}{2}
	\big(
	(1-p)\BBone_{\eta(e)=1}+
	p
	\BBone_{\eta(e)=0}\big)
\BBone_{(\eta,\sigma)\in \cC}
	\,.
$$

\noindent
$\bullet$ If there exists $x\in V$ and $e\in E_x$ such that
	$\eta'=\eta^e$, $\gamma_\eta(e)=0$, $\eta(e)=\delta_\sigma(e)$
	and 
	$\sigma'$ is given by
	$$\forall y\in V\qquad\sigma'(y)\,=\,
\begin{cases}
	\sigma(y)&\text{ if $y$ is not connected to $x$ in $\eta\setminus\{ e\}$}\,,\cr
	-\sigma(y)&\text{ if $y$ is connected to $x$ in $\eta\setminus\{ e\}$}\,,\cr
\end{cases}
$$
	then we set
$$
	c((\eta,\sigma), (\eta',\sigma'))\,= \,
	\frac{1}{4}
	\big(
	(1-p)\BBone_{\eta(e)=1}
\BBone_{(\eta,\sigma)\in \cC}
	+ p
	\BBone_{\eta(e)=0}
\BBone_{(\eta',\sigma')\in \cC}
	\big)
	\,.
$$

\noindent
$\bullet$ Otherwise, if
$(\eta,\sigma)\neq (\eta',\sigma')$, then we set
$c((\eta,\sigma), (\eta',\sigma'))= 0$.

\noindent
$\bullet$ Finally, if $(\eta,\sigma)= (\eta',\sigma')$, then we set
$$
c((\eta,\sigma), (\eta,\sigma))= -\sum_{(\eta',\sigma'),\, (\eta',\sigma')\neq (\eta,\sigma)}c((\eta,\sigma), (\eta',\sigma'))\,.
$$
\end{defi}
%
\noindent
We first check that the dynamics associated to these rates is reversible with respect to the measure
$IP$.
\begin{pro}\label{qroreversible}
The transition rates introduced in definition \ref{def7}  satisfy the   detailed balance equation  (\ref{eqreversible}).
\end{pro}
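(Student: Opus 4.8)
Since the dynamics of Definition~\ref{def7} performs at most one of the three listed types of move at each step (changing one edge, with or without a global spin flip on a cluster), the detailed balance equation~\eqref{eqreversible} is nontrivial only between pairs $(\eta,\sigma)$ and $(\eta',\sigma')$ related by one of these moves. I would therefore split the proof into the three corresponding cases, and in each case verify the identity
\[
IP(\eta,\sigma)\,c((\eta,\sigma),(\eta',\sigma'))=IP(\eta',\sigma')\,c((\eta',\sigma'),(\eta,\sigma)).
\]
Throughout I would use the compact form~\eqref{ipsimply}: $IP(\eta,\sigma)=\frac1Z p^{o(\eta)}(1-p)^{c(\eta)}\BBone_{(\eta,\sigma)\in\cC}$, where $o(\eta),c(\eta)$ count open and closed edges. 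The key elementary remark is that $\eta$ and $\eta^e$ differ in exactly one edge, so $p^{o(\eta)}(1-p)^{c(\eta)}$ and $p^{o(\eta^e)}(1-p)^{c(\eta^e)}$ differ exactly by swapping one factor $p\leftrightarrow(1-p)$; this is precisely what the prefactor $\big((1-p)\BBone_{\eta(e)=1}+p\BBone_{\eta(e)=0}\big)$ in the rates is designed to absorb. Another key remark is that the global spin flip appearing in the third case is an involution on spin configurations that preserves $\delta_\sigma(f)$ for every edge $f$: indeed flipping all spins in the cluster of $x$ in $\eta\setminus\{e\}$ changes $\delta_\sigma(f)$ only for edges $f$ joining that cluster to its complement, and such an edge is necessarily closed in $\eta$ (it is not in the cluster), and it is not $e$ either since $\gamma_\eta(e)=0$ means $e$'s endpoints lie in different clusters of $\eta\setminus\{e\}$... wait, here one must be careful: $e$ itself may join the two sides, so I must check separately that $\delta_{\sigma'}(e)$ behaves correctly. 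This is the subtle point, treated below.

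\textbf{Case 1 (the first bullet, $\gamma_\eta(e)=1$).} Here $\sigma'=\sigma$ and $\eta'=\eta^e$. Since $\gamma_\eta(e)=1$, the endpoints of $e$ are already connected in $\eta\setminus\{e\}$, so $(\eta,\sigma)\in\cC$ iff $(\eta^e,\sigma)\in\cC$ (opening or closing $e$ cannot violate compatibility on $e$, as $\delta_\sigma(e)=1$ is forced by compatibility of the rest of the cluster; if $(\eta,\sigma)\notin\cC$ both sides vanish). On $\cC$, $c((\eta,\sigma),(\eta^e,\sigma))=(1-p)\BBone_{\eta(e)=1}+p\BBone_{\eta(e)=0}$ and $c((\eta^e,\sigma),(\eta,\sigma))=(1-p)\BBone_{\eta(e)=0}+p\BBone_{\eta(e)=1}$ (note $\gamma_{\eta^e}(e)=\gamma_\eta(e)=1$). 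Multiplying by $IP$ and using the swap remark above gives equality directly.

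\textbf{Case 2 (second bullet, $\gamma_\eta(e)=0$, $\delta_\sigma(e)=1$, no spin flip).} Again $\sigma'=\sigma$, $\eta'=\eta^e$, and since $\delta_\sigma(e)=1$ both $(\eta,\sigma)$ and $(\eta^e,\sigma)$ lie in $\cC$ simultaneously. The rate in each direction carries the factor $\tfrac12$, and $\gamma_{\eta^e}(e)=0$ as well (closing/opening $e$ doesn't create a bypass since there was none and removing $e$ can't create one), so the same $p\leftrightarrow(1-p)$ bookkeeping closes the identity. The only point to record is that the second bullet's hypothesis is symmetric in $\eta$ and $\eta^e$, so the reverse rate is indeed given by the same bullet.

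\textbf{Case 3 (third bullet, the nonlocal move).} This is the main obstacle. Here $\eta'=\eta^e$ with $\gamma_\eta(e)=0$ and $\eta(e)=\delta_\sigma(e)$, and $\sigma'$ is obtained by flipping all spins in the $x$-cluster $C$ of $\eta\setminus\{e\}$. I would first check the move is an involution: starting from $(\eta',\sigma')$ one has $\eta'{}^e=\eta$, $\gamma_{\eta'}(e)=\gamma_\eta(e)=0$, the $x$-cluster of $\eta'\setminus\{e\}=\eta\setminus\{e\}$ is still $C$, and flipping $C$ in $\sigma'$ returns $\sigma$; one also needs $\eta'(e)=\delta_{\sigma'}(e)$, which I verify below. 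Next, the crucial compatibility computation: for every edge $f\neq e$, $\delta_{\sigma'}(f)=\delta_\sigma(f)$, because the only $f$'s affected join $C$ to its complement and hence are closed in $\eta\setminus\{e\}$, so whether $f$ is open in $\eta$ or in $\eta^e$ is irrelevant to compatibility of $f$ (if $f$ open in $\eta$ then $f\in C$, contradiction). For the edge $e=\langle x,y\rangle$ itself: if $y\in C$ then $e$ has both endpoints in $C$, $\sigma'$ flips them both, so $\delta_{\sigma'}(e)=\delta_\sigma(e)$; but then $\gamma_\eta(e)=1$, contradicting the hypothesis — so necessarily $y\notin C$, $\sigma'$ flips $\sigma(x)$ but not $\sigma(y)$, hence $\delta_{\sigma'}(e)=1-\delta_\sigma(e)$. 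Combined with $\eta(e)=\delta_\sigma(e)$ this gives $\eta^e(e)=1-\delta_\sigma(e)=\delta_{\sigma'}(e)$, confirming $\eta'(e)=\delta_{\sigma'}(e)$ and the involution. Now the detailed balance identity: since $\delta$ is unchanged on all edges except $e$, and $p^{o}(1-p)^{c}$ changes only by the $e$-swap, we get $IP(\eta,\sigma)/IP(\eta^e,\sigma')=$ (a factor $p/(1-p)$ or $(1-p)/p$ depending on $\eta(e)$), provided both pairs are compatible. The rate of the third bullet is $\tfrac14\big((1-p)\BBone_{\eta(e)=1}\BBone_{(\eta,\sigma)\in\cC}+p\BBone_{\eta(e)=0}\BBone_{(\eta',\sigma')\in\cC}\big)$: note $\eta(e)=\delta_\sigma(e)$ forces $(\eta,\sigma)\in\cC$ on the $e$-coordinate, and the indicator $\BBone_{(\eta',\sigma')\in\cC}$ in the $\eta(e)=0$ branch is exactly what accounts for the case where closing $e$ was fine but the spin flip is only performed when the result is globally compatible. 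Matching the $\eta(e)=1$ branch of the forward rate against the $\eta^e(e)=0$ branch of the reverse rate (and vice versa), and using $IP(\eta,\sigma)\,p^{-1}=IP(\eta^e,\sigma')\,(1-p)^{-1}$ up to the common compatibility indicator, yields the identity. I would also separately dispatch the degenerate subcase where one of the two configurations is not in $\cC$: then its $IP$-weight is zero and the corresponding rate carries a compatibility indicator that also vanishes, so both sides are zero.

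\textbf{Conclusion.} Having verified detailed balance in each of the three move-types — and noting that when $(\eta,\sigma)$ and $(\eta',\sigma')$ are not related by any move both rates are zero, and when they are equal the identity is trivial — equation~\eqref{eqreversible} holds in all cases, which proves Proposition~\ref{qroreversible}. The bulk of the work, and the only place real care is needed, is the third case: tracking how the nonlocal cluster flip interacts with compatibility, in particular the sign-flip of $\delta_\sigma(e)$ on the toggled edge and the role of the two distinct compatibility indicators $\BBone_{(\eta,\sigma)\in\cC}$ and $\BBone_{(\eta',\sigma')\in\cC}$ in the two branches of the rate.
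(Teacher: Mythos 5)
Your proof is correct and follows essentially the same route as the paper: a case-by-case verification of detailed balance for each of the three move types in Definition~\ref{def7}, with the $p\leftrightarrow(1-p)$ swap absorbed by the rate prefactors and the compatibility indicators $\BBone_{(\eta,\sigma)\in \cC}$, $\BBone_{(\eta',\sigma')\in \cC}$ tracked through the cluster flip. One small imprecision: in Case 3 the literal claim that $\delta_{\sigma'}(f)=\delta_\sigma(f)$ for all $f\neq e$ fails for the (necessarily closed) edges joining the cluster $C$ to its complement, but your parenthetical already supplies the fact that actually matters, namely that such edges contribute the factor $1-p$ to $IP$ regardless of the value of $\delta$.
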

\begin{proof}
Let $(\eta,\sigma)$ and $(\eta',\sigma')$ be fixed in $\{0,1\}^E\times \{-1,1\}^V$.
	We have to prove (\ref{eqreversible}). We consider several cases, as in definition~\ref{def7}.
Suppose first that 
$\sigma'=\sigma$ and
	there exists $e\in E$ such that $\eta'=\eta^e$ and $\gamma_\eta(e)=1$.
	We have then
\begin{multline*}
IP(\eta,\sigma)c((\eta,\sigma),(\eta',\sigma))\,=\,\cr
 \kern-23pt
	\frac{1}{Z}
	\left(\prod_{f\in E}\left(p\BBone_{\eta(f)=1}\delta_{\sigma}(f)+ (1-p)\BBone_{\eta(f)=0} \right)\right)
	\big(
	(1-p)\BBone_{\eta(e)=1}+
	p\BBone_{\eta(e)=0}\big)
\BBone_{(\eta,\sigma)\in \cC}
	\cr
	\,=\,
	\frac{1}{Z}
	\left(\prod_{f\in E\setminus\{e\}}
	\left(p\BBone_{\eta(f)=1}+ (1-p)\BBone_{\eta(f)=0} \right)\right)
	(1-p)p\,
\BBone_{(\eta,\sigma)\in \cC}\,.
\end{multline*}
	Since $\eta'=\eta^e$ and $\gamma_\eta(e)=1$, then 
${(\eta,\sigma)\in \cC}$ if and only if
${(\eta',\sigma)\in \cC}$.
Also, the product in the last formula is the same for $\eta$ and $\eta'$, 
thus we obtain the detailed balance equation (\ref{eqreversible}).
Suppose next that 
$\sigma'=\sigma$ and
	there exists $e\in E$ such that $\eta'=\eta^e$, $\gamma_\eta(e)=0$ and
	$\delta_\sigma(e)=1$.
	We have then
\begin{multline*}
IP(\eta,\sigma)c((\eta,\sigma),(\eta',\sigma))\,=\,\cr
 \kern-23pt
	\frac{1}{Z}
	\left(\prod_{f\in E}\left(p\BBone_{\eta(f)=1}\delta_{\sigma}(f)+ (1-p)\BBone_{\eta(f)=0} \right)\right)
	\frac{1}{2}
	\big(
	(1-p)\BBone_{\eta(e)=1}+
	p\BBone_{\eta(e)=0}\big)
\BBone_{(\eta,\sigma)\in \cC}
	\cr
	\,=\,
	\frac{1}{Z}
	\left(\prod_{f\in E\setminus\{e\}}
	\left(p\BBone_{\eta(f)=1}+ (1-p)\BBone_{\eta(f)=0} \right)\right)
	\frac{1}{2}
	(1-p)p\,
\BBone_{(\eta,\sigma)\in \cC}\,.
\end{multline*}
	Since $\eta'=\eta^e$ and $\delta_\sigma(e)=1$, then 
${(\eta,\sigma)\in \cC}$ if and only if
${(\eta',\sigma)\in \cC}$.
Also, the product in the last formula is the same for $\eta$ and $\eta'$, 
thus we obtain the detailed balance equation (\ref{eqreversible}).
Suppose finally that there exists $x\in V$ and $e\in E_x$ such that
	$\eta'=\eta^e$, $\gamma_\eta(e)=0$, $\eta(e)=\delta_\sigma(e)$
	and 
	$\sigma'$ is obtained from $\sigma$ by reversing all the spins of the sites
	which are connected to $x$ by an open path in $\eta\setminus\{e\}$.
	We have then
$$\displaylines{
IP(\eta,\sigma)c((\eta,\sigma),(\eta',\sigma'))\,=\,\hfill\cr
	\frac{1}{Z}
	\left(\prod_{f\in E}\left(p\BBone_{\eta(f)=1}\delta_{\sigma}(f)+ (1-p)\BBone_{\eta(f)=0} \right)\right)
	\frac{1}{4}
	\big(
	(1-p)\BBone_{\eta(e)=1}
\BBone_{(\eta,\sigma)\in \cC}
	+ p
	\BBone_{\eta(e)=0}
\BBone_{(\eta',\sigma')\in \cC}
	\big)
	\cr
	\,=\,
	\frac{1}{Z}
	\left(\prod_{f\in E\setminus\{e\}}
	\left(p\BBone_{\eta(f)=1}+ (1-p)\BBone_{\eta(f)=0} \right)\right)
	\frac{1}{4}
	(1-p)p\,
	\big(
	\BBone_{\eta(e)=1}
\BBone_{(\eta,\sigma)\in \cC}
	+ 
	\BBone_{\eta(e)=0}
\BBone_{(\eta',\sigma')\in \cC}
	\big)
	\,.
	}$$
To remove the symbol $\delta_\sigma(f)$ in the last line,
	we have used the fact that, 
	if $\eta(e)=0$ and
	$(\eta',\sigma')\in \cC$, then
	$(\eta,\sigma)\in \cC$.
Since we have
$$
	\BBone_{\eta(e)=1}
\BBone_{(\eta,\sigma)\in \cC}
	+ 
	\BBone_{\eta(e)=0}
\BBone_{(\eta',\sigma')\in \cC}
\,=\,
	\BBone_{\eta'(e)=1}
\BBone_{(\eta',\sigma')\in \cC}
	+ 
	\BBone_{\eta'(e)=0}
\BBone_{(\eta,\sigma)\in \cC}\,,
$$
then we can conclude that
$$IP(\eta,\sigma)c((\eta,\sigma),(\eta',\sigma'))\,=\,
IP(\eta',\sigma')c((\eta',\sigma'),(\eta,\sigma))$$
and the detailed balance equation 
 holds also in this case.
We conclude that the detailed balance equation (\ref{eqreversible}) is always satisfied.
\end{proof}

\noindent We compute next the marginal dynamics on the edges.
\begin{pro}\label{qro2}
 Let $\eta$ be an element of $\{0,1\}^E$ and let $e\in E$.
 Let also $\sigma$ be an element of $\{-1,1\}^V$ such that 
$(\eta,\sigma)\in \cC$.

\noindent
$\bullet$ If $\gamma_\eta(e)=1$, then 
$$
\sum_{\sigma'\in \{-1,1\}^V}c((\eta,\sigma), (\eta^e,\sigma'))\,=\,
p\BBone_{\eta(e)=0}+ (1-p)\BBone_{\eta(e)=1}\,.
$$
\noindent
	$\bullet$ If $\gamma_\eta(e)=0$ and $\eta(e)=0$, then
$$
\sum_{\sigma'\in \{-1,1\}^V}c((\eta,\sigma), (\eta^e,\sigma'))\,=\,
	\frac{\displaystyle p}{\displaystyle 2}\,.
$$
\noindent
	$\bullet$ If $\gamma_\eta(e)=0$ and $\eta(e)=1$, then 
$$
\sum_{\sigma'\in \{-1,1\}^V}c((\eta,\sigma), (\eta^e,\sigma'))
	\,=\,
	1-p\,.
$$
\end{pro}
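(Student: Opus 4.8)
The plan is a bookkeeping exercise: I would fix $(\eta,\sigma)\in\cC$ and $e\in E$, and go through the "moving" bullets of Definition~\ref{def7} to see which can realise a transition of the form $(\eta,\sigma)\to(\eta^e,\sigma')$. Only such transitions contribute to the sum, so transitions touching an edge other than $e$ are irrelevant. Two remarks are used throughout: since $(\eta,\sigma)\in\cC$, every indicator $\BBone_{(\eta,\sigma)\in\cC}$ occurring in the rates equals $1$; and $e=\langle x,y\rangle$ has exactly two endpoints, so in the third bullet the condition $e\in E_x$ restricts $x$ to these two vertices, giving at most two candidate configurations $\sigma'$.

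First I would treat $\gamma_\eta(e)=1$. Then only the first bullet can apply (the second and third require $\gamma_\eta(e)=0$); it forces $\sigma'=\sigma$ and reads off the rate $(1-p)\BBone_{\eta(e)=1}+p\BBone_{\eta(e)=0}$, which is the first formula. Next, $\gamma_\eta(e)=0$ and $\eta(e)=1$: compatibility then forces $\delta_\sigma(e)=1$, hence $\eta(e)=\delta_\sigma(e)$, so both the second and the third bullet are active. The second gives $\sigma'=\sigma$ with rate $\tfrac12(1-p)$. The third, with $x$ running over the two endpoints of $e$, produces for each endpoint the configuration obtained from $\sigma$ by reversing the spins of the cluster of that endpoint in $\eta\setminus\{e\}$; since $\gamma_\eta(e)=0$ these two clusters are disjoint and non-empty, so the two configurations are distinct from each other and from $\sigma$, and each carries rate $\tfrac14(1-p)$ (the term $(1-p)\BBone_{\eta(e)=1}\BBone_{(\eta,\sigma)\in\cC}$). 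Adding up, $\tfrac12(1-p)+2\cdot\tfrac14(1-p)=1-p$, the third formula.

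It remains to handle $\gamma_\eta(e)=0$ and $\eta(e)=0$, where I would split on $\delta_\sigma(e)$. If $\delta_\sigma(e)=1$, the third bullet fails ($\eta(e)\neq\delta_\sigma(e)$) and only the second bullet contributes, with $\sigma'=\sigma$ and rate $\tfrac p2$. If $\delta_\sigma(e)=0$, the second bullet fails and only the third contributes: again $x$ ranges over the two endpoints of $e$, yielding two distinct configurations $\sigma'$, each with rate $\tfrac14\,p\,\BBone_{(\eta^e,\sigma')\in\cC}$; provided these compatibility indicators are $1$, the sum is $2\cdot\tfrac14 p=\tfrac p2$. In either sub-case the total is $\tfrac p2$, the second formula, which is why the statement does not need to distinguish the value of $\delta_\sigma(e)$.

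The hard part, and the only step that is not mechanical, is the third-bullet bookkeeping, in two respects. First, for the case $\eta(e)=0,\ \delta_\sigma(e)=0$ one must verify that $\BBone_{(\eta^e,\sigma')\in\cC}=1$: here $\eta\setminus\{e\}=\eta$, so the flipped cluster is a connected component of $\eta$; no open edge of $\eta$ straddles it, so all $\delta$-values are unchanged off $e$, while on $e$ the two endpoints lie in different components, so the flip turns $\delta_\sigma(e)=0$ into $\delta_{\sigma'}(e)=1$, making $(\eta^e,\sigma')$ compatible. Second, one must check that the (at most three) candidate targets — $\sigma'=\sigma$ and the two cluster-flips — are pairwise distinct, so that their rates genuinely add; this follows from $\gamma_\eta(e)=0$ together with the non-emptiness of the two clusters. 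Once these elementary facts are recorded, the three claimed values drop out by collecting the rates as above.
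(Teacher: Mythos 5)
Your proposal is correct and follows essentially the same route as the paper's proof: a case analysis over the bullets of Definition~\ref{def7}, identifying which transitions $(\eta,\sigma)\to(\eta^e,\sigma')$ carry a nonzero rate and summing them (one term when $\gamma_\eta(e)=1$, one or two terms of total $p/2$ when $\gamma_\eta(e)=\eta(e)=0$ depending on $\delta_\sigma(e)$, and three terms summing to $1-p$ when $\gamma_\eta(e)=0$, $\eta(e)=1$). Your explicit verification that $\BBone_{(\eta^e,\sigma')\in\cC}=1$ in the third bullet and that the candidate targets are pairwise distinct is left implicit in the paper, so your write-up is if anything slightly more careful.
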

\begin{proof}
Let $\eta,e,\sigma$ be as in the statement of the proposition.
We use the expression for the rates given in
the definition~\ref{def7}.
If
$\gamma_\eta(e)=1$, then 
$c((\eta,\sigma), (\eta^e,\sigma'))$ is null unless $\sigma'=\sigma$. Thus the sum
reduces to 
	$c((\eta,\sigma), (\eta^e,\sigma))$, which is equal to $p$ if $\eta(e)=0$ and
	to $1-p$ if $\eta(e)=1$.
Suppose next that $\gamma_\eta(e)=0$ and $\eta(e)=0$. 
	We consider two further cases.
	If $\delta_\sigma(e)=1$, 
	then the sum
reduces again to 
	$c((\eta,\sigma), (\eta^e,\sigma))$, which in this case is equal to $p/2$.
	If $\delta_\sigma(e)=0$, 
	then the sum contains two terms, corresponding to the two spin configurations
	$\sigma'$
	obtained 
	from $\sigma$ by reversing all the spins of the sites
	which are connected to one extremity of $e$ by an open path in $\eta\setminus\{e\}$.
	These two rates
	$c((\eta,\sigma), (\eta^e,\sigma'))$ are both equal to $p/4$, and their sum is again
	equal to $p/2$.
	Suppose finally that
	$\gamma_\eta(e)=0$ and $\eta(e)=1$.
	The sum 
	contains three terms, one corresponding to $\sigma'=\sigma$
	and two
	corresponding to the two spin configurations
	$\sigma'$
	obtained 
	from $\sigma$ by reversing all the spins of the sites
	which are connected to one extremity of $e$ by an open path in $\eta\setminus\{e\}$.
	The rate
	$c((\eta,\sigma), (\eta^e,\sigma))$ is equal to $(1-p)/2$.
	The two other rates
	$c((\eta,\sigma), (\eta^e,\sigma'))$ are equal to $(1-p)/4$.
	In total, the sum is equal to $1-p$.
\end{proof}
\noindent
The marginal dynamics on the spins is quite complicated, but it is not a Markov process.
Let us consider an edge configuration $\eta$ and a spin configuration $\sigma$ such that
there exist $x\in V$ and $e\in E_x$ such that
	$\gamma_\eta(e)=\eta(e)=\delta_\sigma(e)=0$.
	Let 
	$\sigma'$ be the spin configuration defined by
	$$\forall y\in V\qquad\sigma'(y)\,=\,
\begin{cases}
	\sigma(y)&\text{ if $y$ is not connected to $x$ in $\eta\setminus\{ e\}$}\,,\cr
	-\sigma(y)&\text{ if $y$ is connected to $x$ in $\eta\setminus\{ e\}$}\,.\cr
\end{cases}
$$
We have then
$$
\sum_{\eta'\in \{0,1\}^E}c((\eta,\sigma), (\eta',\sigma'))\,=\,
	c((\eta,\sigma), (\eta^e,\sigma'))\,= \,
	\frac{p}{4}\,.
	$$
However the configuration $\sigma'$ depends on $\eta$. Indeed, if we change the status
of some edges 
$f$ in $E_x\setminus \{e\}$, we will change the set of the vertices which are connected 
to $x$ in $\eta\setminus\{e\}$, and we will obtain an edge configuration $\overline{\eta}$
for which
$$\sum_{\eta'\in \{0,1\}^E}c((\overline{\eta},\sigma), (\eta',\sigma'))\,=\,
c((\overline{\eta},\sigma), (\overline{\eta}^e,\sigma'))\,= \,0\,.$$
Thus the above sum depends on the configuration $\eta$ and the marginal process
on the spin configurations is not a Markov process.
\noindent 
This remark, together with propositions  \ref{qroreversible} and~\ref{qro2}, yield 
the following theorem.
\begin{theo}\label{theo3}
Let $(\eta_t,\sigma_t)_{t\geq 0}$ be a  Markov jump process on $\cC$ with the transition rates
introduced in definition \ref{def1}. Then $(\eta_t,\sigma_t)_{t\geq 0}$ is reversible with respect to the coupling measure IP.
	Its first marginal $(\eta_t)_{t\geq 0}$ is a 
	Markov process evolving according to the FK dynamics while 
its second marginal process
 $(\sigma_t)_{t\geq 0}$ is a 
	non-Markovian jump process.
\end{theo}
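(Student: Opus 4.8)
The plan is to assemble three ingredients, each essentially prepared above, and to feed them into the Ball--Yeo criterion (Theorem~\ref{theoball}). The reversibility of $(\eta_t,\sigma_t)_{t\geq 0}$ with respect to $IP$ is nothing but Proposition~\ref{qroreversible}: the rates of Definition~\ref{def7} satisfy the detailed balance equation~\eqref{eqreversible}, which is equivalent to reversibility.

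For the first marginal I would invoke Theorem~\ref{theoball}, according to which $(\eta_t)_{t\geq 0}$ is a Markov jump process with rates $\tilde c(\eta,\eta')$ if and only if, for every $\eta'$ and every $(\eta,\sigma)\in\cC$, the sum $\sum_{\sigma'}q((\eta,\sigma),(\eta',\sigma'))$ does not depend on $\sigma$ among the spin configurations compatible with $\eta$. For $\eta'=\eta^e$ this independence, together with the common value, is precisely Proposition~\ref{qro2}: the sum equals $p\BBone_{\eta(e)=0}+(1-p)\BBone_{\eta(e)=1}$ when $\gamma_\eta(e)=1$, it equals $p/2$ when $\gamma_\eta(e)=0$ and $\eta(e)=0$, and it equals $1-p$ when $\gamma_\eta(e)=0$ and $\eta(e)=1$. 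Comparing with Section~\ref{fkdyna}, these are exactly the rates $c(e,\eta)$ of the FK dynamics, the value being $1-p$ whenever $\eta(e)=1$ irrespective of $\gamma_\eta(e)$. For $\eta'=\eta$ one notes that every non-diagonal transition permitted by Definition~\ref{def7} modifies $\eta$, so $\sum_{\sigma'}q((\eta,\sigma),(\eta,\sigma'))=q((\eta,\sigma),(\eta,\sigma))=-\sum_{e\in E}\sum_{\sigma'}c((\eta,\sigma),(\eta^e,\sigma'))=-\sum_{e\in E}\tilde c(\eta,\eta^e)$, again independent of $\sigma$; for every other $\eta'$ the sum vanishes. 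Theorem~\ref{theoball} then gives that $(\eta_t)_{t\geq 0}$ is Markovian with the FK rates.

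For the second marginal I would use the converse direction of the same criterion (Theorem~\ref{theoball2}): it suffices to exhibit one pair $\sigma,\sigma'$ and two edge configurations $\eta,\overline\eta$ compatible with $\sigma$ for which $\sum_{\eta'}c((\eta,\sigma),(\eta',\sigma'))\neq\sum_{\eta'}c((\overline\eta,\sigma),(\eta',\sigma'))$. This is the configuration displayed in the remark preceding the statement: choose $x\in V$ and $e\in E_x$ with $\gamma_\eta(e)=\eta(e)=\delta_\sigma(e)=0$ and let $\sigma'$ be obtained from $\sigma$ by flipping the spins of the open cluster of $x$ in $\eta\setminus\{e\}$; then only the third case of Definition~\ref{def7} contributes and $\sum_{\eta'}c((\eta,\sigma),(\eta',\sigma'))=c((\eta,\sigma),(\eta^e,\sigma'))=p/4$, whereas flipping the state of one edge of $E_x\setminus\{e\}$ changes that cluster, makes the third case inapplicable, and gives sum $0$. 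Hence $(\sigma_t)_{t\geq 0}$ is not Markovian. The only delicate point --- and the one I expect to be the main obstacle --- is the bookkeeping in the second step: one must verify that Proposition~\ref{qro2} is applied uniformly over all $\sigma$ compatible with a given $\eta$, so that the hypothesis of Theorem~\ref{theoball} is genuinely satisfied, and that its three cases do reassemble into $c(e,\eta)$ in every configuration; one should also record that the construction in the third step requires the graph to be rich enough to host such a configuration, which holds for instance under Hypothesis~\ref{adhy}.
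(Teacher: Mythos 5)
Your proposal is correct and follows essentially the same route as the paper: reversibility via Proposition~\ref{qroreversible}, Markovianity of the edge marginal with the FK rates via Proposition~\ref{qro2} combined with the Ball--Yeo lumpability criterion, and non-Markovianity of the spin marginal via the explicit configuration of the remark preceding the theorem. The only slip is a label --- the criterion you need for the spin marginal is Theorem~\ref{theoball} (the sum $\sum_{\eta'}c((\eta,\sigma),(\eta',\sigma'))$ must be independent of $\eta$), not Theorem~\ref{theoball2}, which concerns independence of the marginals --- and your closing caveat that the graph must be rich enough to host the witnessing configuration is a fair point the paper leaves implicit.
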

\begin{proof}
The reversibility property is deduced from Proposition  \ref{qroreversible}. Proposition \ref{qro2}
shows that the sum $\sum_{\sigma'\in \{-1,1\}^V}c((\eta,\sigma), (\eta',\sigma'))$ does not depend on $\sigma$ for any $\eta,\eta'\in \{0,1\}^E$,
moreover this sum is equal to the  transition rates given in (\ref{marginal}).
The proof of Theorem \ref{theo3} is complete thanks to the remark before the theorem and 
	the result of Ball and Yeo \cite{Ball}.
\end{proof}
So this third dynamics does not fulfill our dream, although it is not even local. 

\section{One edge and one spin}
\label{four}
A notable inconvenient of the previous dynamics is that it might reverse simultaneously
all the spins associated to an open cluster of the edge configuration.
We introduce here a slight modification of this dynamics to ensure that at most one spin
is changed at a time.
For $x\in V$, recall that
$E_x$ is the set of the edges $e$ in $E$ having $x$ as endvertex, i.e.,
$$
E_x=\{e\in E,\, e=\langle x,y\rangle,\, y\in V\}\,.
$$
The following definition gives the transition rates of the dynamics.
\begin{defi}\label{def8}
Let $(\eta,\sigma)$, $(\eta',\sigma')$ be two elements of $\{0,1\}^E\times\{-1,1\}^V$.
We consider several cases:

\noindent
$\bullet$ If $\sigma'=\sigma$ and
	there exists $e\in E$ such that $\eta'=\eta^e$ and $\gamma_\eta(e)=1$, then we 
	set
$$
	c((\eta,\sigma), (\eta',\sigma'))= 
	\big(
	(1-p)\BBone_{\eta(e)=1}+
	p\BBone_{\eta(e)=0}\big)
\BBone_{(\eta,\sigma)\in \cC}
	\,.
$$

\noindent
$\bullet$ If $\sigma'=\sigma$ and
	there exists $e\in E$ such that $\eta'=\eta^e$, $\gamma_\eta(e)=0$ and
	$\delta_\sigma(e)=1$, then we set
$$
	c((\eta,\sigma), (\eta',\sigma'))\,= \,
	\frac{1}{2}
	\big(
	(1-p)\BBone_{\eta(e)=1}+
	p
	\BBone_{\eta(e)=0}\big)
\BBone_{(\eta,\sigma)\in \cC}
	\,.
$$

\noindent
$\bullet$ If there exists $x\in V$ and $e\in E_x$ such that
	$\sigma'=\sigma^x$, $\eta'=\eta^e$, $\gamma_\eta(e)=0$, $\eta(e)=\delta_\sigma(e)$
	and $\eta(f)=0$ for $f\in E_x\setminus\{e\}$,
	then we set
$$
	c((\eta,\sigma), (\eta',\sigma'))\,= \,
	\frac{1}{4}
	\big(
	(1-p)\BBone_{\eta(e)=1}
\BBone_{(\eta,\sigma)\in \cC}
	+ p
	\BBone_{\eta(e)=0}
\BBone_{(\eta',\sigma')\in \cC}
	\big)
	\,.
$$

\noindent
$\bullet$ Otherwise, if
$(\eta,\sigma)\neq (\eta',\sigma')$, then we set
$c((\eta,\sigma), (\eta',\sigma'))= 0$.

\noindent
$\bullet$ Finally, if $(\eta,\sigma)= (\eta',\sigma')$, then we set
$$
c((\eta,\sigma), (\eta,\sigma))= -\sum_{(\eta',\sigma'),\, (\eta',\sigma')\neq (\eta,\sigma)}c((\eta,\sigma), (\eta',\sigma'))\,.
$$
\end{defi}
\noindent
The difference compared to
the rates of 
definition~\ref{def7} is that, in
definition~\ref{def8}, we allow to reverse the spins of a cluster only
when it is reduced
to a single vertex.
This dynamics possesses the same properties as the dynamics of the previous section, namely,
it is reversible with respect to the coupling measure IP,
	its first marginal process on the edges
is a 
	Markov process 
	while 
its second marginal process on the spins is a non-Markovian jump process.
The marginal dynamics on the edges differs from the FK dynamics in the following way.
An edge can be opened between two different clusters only if one the two clusters is reduced
to a single vertex.
More precisely, the corresponding rates are the following.
For $x\in V$ and $e\in E_x$, for any configuration $\eta$,
\begin{equation*}
	c(\eta,\eta^e)
\,=\,
\begin{cases}
	1-p&\text{ if }\eta(e)=1\,,\cr
	p&\text{ if }\eta(e)=0\text{ and }\gamma_\eta(e)=1\,,\cr
	p/2&\text{ if }\eta(f)=0\text{ for $f\in E_x$}\,.\cr
\end{cases}
\end{equation*}
Funnily enough, this fourth dynamics can be seen as a contraction of the first dynamics,
which could make only one change at a time. 
\noindent The following proposition shows that,
if the initial condition $(\eta_0,\sigma_0)$ is distributed as $IP$,
then
the infinitesimal behavior of the marginal on the spins of the process $(\eta_t,\sigma_t)_{t\geq 0}$,
whose rates are introduced in definition \ref{def8}, is
quite similar to the transition rates of the Glauber dynamics as given in (\ref{marginal}). The difference
lies in the factor $|E_x|$. On the cubic lattice $\mathbb{Z}^d$, the factor $|E_x|$ is equal to $2d$ and it
is independent of $x$.
\begin{pro}\label{umarg}
Let $(\eta_t,\sigma_t)_{t\geq 0}$ be a  Markov jump process with the transition rates
introduced in definition \ref{def8}. Suppose that  the pair $(\eta_0,\sigma_0)$ is distributed according to $IP$. Then, for any $x\in V$,
for any $\sigma\in \{-1,1\}^V$ and any $s\geq 0$,
\begin{equation}\label{ulimite}
\lim_{t\rightarrow 0}\frac{1}{t}\BBp(\sigma_{t+s}=\sigma^x|\sigma_s=\sigma)\,=\,
	\frac{1}{4}
	|E_x|
 p(1-p)^{|\{\,e\in E_x:\delta_\sigma(e)=1\,\}|}\,.
\end{equation}
\end{pro}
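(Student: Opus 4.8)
The plan is to mimic the proof of Proposition~\ref{marg} almost verbatim, the only difference being that the relevant transition rate from $(\eta,\sigma)$ to $(\eta',\sigma^x)$ is now nonzero only when a specific edge $e\in E_x$ is flipped and, moreover, all edges of $E_x\setminus\{e\}$ are closed in $\eta$. First I would write
\begin{equation*}
\BBp(\sigma_{t+s}=\sigma^x|\sigma_s=\sigma)=\sum_{\eta,\eta'\in\{0,1\}^E}\BBp(\sigma_{t+s}=\sigma^x,\eta_{t+s}=\eta'|\sigma_s=\sigma,\eta_s=\eta)\,\BBp(\eta_s=\eta|\sigma_s=\sigma),
\end{equation*}
and then divide by $t$ and let $t\to0$. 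By the Markov jump property, $\frac1t\BBp(\sigma_{t+s}=\sigma^x,\eta_{t+s}=\eta'|\sigma_s=\sigma,\eta_s=\eta)\to c((\eta,\sigma),(\eta',\sigma^x))$, and reading off definition~\ref{def8}, this rate is nonzero only in the third case: there must be some $e\in E_x$ with $\eta'=\eta^e$, $\gamma_\eta(e)=0$, $\eta(e)=\delta_\sigma(e)$, and $\eta(f)=0$ for all $f\in E_x\setminus\{e\}$, in which case the rate equals $\frac14\big((1-p)\BBone_{\eta(e)=1}\BBone_{(\eta,\sigma)\in\cC}+p\BBone_{\eta(e)=0}\BBone_{(\eta^e,\sigma)\in\cC}\big)$.

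Next I would use that $IP$ is stationary (Proposition~\ref{qroreversible}, whose argument carries over to definition~\ref{def8}), so that $(\eta_s,\sigma_s)\sim IP$ and $\BBp(\eta_s=\eta|\sigma_s=\sigma)=IP(\eta,\sigma)/\sum_{\eta''}IP(\eta'',\sigma)$. Summing over all $e\in E_x$ and over all $\eta$ with $\eta$ closed on $E_x\setminus\{e\}$, the limit becomes
\begin{equation*}
\lim_{t\to0}\frac1t\BBp(\sigma_{t+s}=\sigma^x|\sigma_s=\sigma)=\frac{\displaystyle\sum_{e\in E_x}\ \sum_{\substack{\eta\in\cC:\ \eta\equiv0\text{ on }E_x\setminus\{e\},\\ \gamma_\eta(e)=0,\ \eta(e)=\delta_\sigma(e)}}\frac14\big((1-p)\BBone_{\eta(e)=1}+p\BBone_{\eta(e)=0}\BBone_{(\eta^e,\sigma)\in\cC}\big)IP(\eta,\sigma)}{\displaystyle\sum_{\eta''\in\{0,1\}^E}IP(\eta'',\sigma)}.
\end{equation*}
The key observation is a parity/pairing argument on the numerator: for a fixed $e\in E_x$, the configurations $\eta$ and $\eta^e$ (both with $E_x\setminus\{e\}$ closed) are paired, and since $\gamma_\eta(e)=0$ while all other edges of $E_x$ are closed, exactly one of $\eta,\eta^e$ has $\eta(e)=\delta_\sigma(e)$; the two weighted terms combine. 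One should check that the condition $\gamma_\eta(e)=0$ is automatic here or else accounted for — since $\eta$ is closed on $E_x\setminus\{e\}$, the only possible connection between the endpoints of $e$ not using $e$ must go through other edges, so $\gamma_\eta(e)$ may or may not vanish; this is a point to handle carefully. The cleanest route is: group the sum over $\eta$ by its restriction to $E\setminus E_x$, factor $IP(\eta,\sigma)$ accordingly using \eqref{ipsimply}, and note that the contribution of the edges in $E_x\setminus\{e\}$ being forced closed gives a factor $(1-p)^{|\{f\in E_x\setminus\{e\}:\ \delta_\sigma(f)=1\}|}$ times the $(1-p)^{|\{f\in E_x\setminus\{e\}:\ \delta_\sigma(f)=0\}|}$ already present, which together with the $e$-term gives, after the pairing, a factor proportional to $p(1-p)^{|\{f\in E_x:\ \delta_\sigma(f)=1\}|}$ per choice of $e$.

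Finally, summing the contribution over the $|E_x|$ choices of $e$ produces the factor $|E_x|$, while the normalisation $\sum_{\eta''}IP(\eta'',\sigma)$ is exactly the quantity computed in \eqref{transf}, equal to $\frac1Z(1-p)^{|\{e\in E:\delta_\sigma(e)=0\}|}$; dividing and cancelling the common factor $(1-p)^{|\{e\in E:\delta_\sigma(e)=0\}|}/Z$ leaves $\frac14|E_x|\,p\,(1-p)^{|\{e\in E_x:\delta_\sigma(e)=1\}|}$, which is \eqref{ulimite}. I expect the main obstacle to be the bookkeeping in the numerator: correctly matching the $\BBone_{(\eta,\sigma)\in\cC}$ and $\BBone_{(\eta^e,\sigma)\in\cC}$ indicators with the constraint $\eta(e)=\delta_\sigma(e)$, verifying that $\gamma_\eta(e)=0$ does not secretly remove any configuration from the count (it does not, once the other edges of $E_x$ are closed and we sum over all compatible configurations on $E\setminus E_x$, because the factor of $2^{k}$-type cancellation is already absorbed into the passage from $IP$ to $\phi_{p,2}$), and tracking the constant $\tfrac14$ through the pairing so that exactly the stated prefactor $\tfrac14|E_x|p$ emerges rather than, say, $\tfrac14|E_x|$ or $\tfrac p2$.
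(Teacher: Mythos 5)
Your plan is the same as the paper's: decompose $\BBp(\sigma_{t+s}=\sigma^x|\sigma_s=\sigma)$ over $\eta,\eta'$, pass to the rates, invoke stationarity of $IP$, and evaluate the resulting sums by factoring $IP$ over $E_x$ and $E\setminus E_x$; this is exactly the route taken in the paper and it does yield \eqref{ulimite}. One step, however, is transcribed incorrectly and would derail the computation if carried through literally. You wrote the third rate of definition~\ref{def8} with the indicator $\BBone_{(\eta^e,\sigma)\in\cC}$, whereas the definition has $\BBone_{(\eta',\sigma')\in\cC}=\BBone_{(\eta^e,\sigma^x)\in\cC}$. This is not a harmless slip: in the regime $\eta(e)=\delta_\sigma(e)=0$ one has $\eta^e(e)=1>\delta_\sigma(e)$, so $\BBone_{(\eta^e,\sigma)\in\cC}$ vanishes identically and your second term contributes nothing. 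Your numerator would then only collect the edges $e\in E_x$ with $\delta_\sigma(e)=1$ (through the $(1-p)\BBone_{\eta(e)=1}$ term), and the final prefactor would come out as $\tfrac14\,|\{e\in E_x:\delta_\sigma(e)=1\}|\,p$ rather than $\tfrac14|E_x|\,p$. With the correct indicator $\BBone_{(\eta^e,\sigma^x)\in\cC}$, the $\eta(e)=0$ term contributes exactly when $\delta_\sigma(e)=0$ (flipping the spin at $x$ makes $e$ satisfied, and no other edge at $x$ is open), the $\eta(e)=1$ term contributes exactly when $\delta_\sigma(e)=1$, and the two contributions are equal; it is this dichotomy, summed over $e\in E_x$, that produces the factor $|E_x|$ in the paper's computation leading to \eqref{ayx}. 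So the ``pairing'' you describe only closes once the indicator is fixed.

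Your remaining worry about $\gamma_\eta(e)=0$ has a one-line resolution which has nothing to do with the $2^{k(\eta)}$ factor: if every edge of $E_x\setminus\{e\}$ is closed in $\eta$, then any open path joining the endpoints of $e$ and avoiding $e$ would have to leave $x$ through an edge of $E_x\setminus\{e\}$, which is impossible; hence $\gamma_\eta(e)=0$ holds automatically and the condition removes no configuration from the sum. This is why the paper can drop the condition without comment in formula \eqref{try}.
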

\begin{proof} We have, for any $\sigma\in \{-1,1\}^V$, $x\in V$ and $s\geq 0$,
\begin{eqnarray}\label{uee1}
&& \BBp(\sigma_{t+s}=\sigma^x|\sigma_s=\sigma) = \sum_{\eta\in \{0,1\}^E}\sum_{\eta'\in \{0,1\}^E}\BBp(\sigma_{t+s}=\sigma^x, \eta_{t+s}=\eta',\eta_s=\eta|\sigma_s=\sigma){\nonumber}\\
&&= \sum_{\eta\in \{0,1\}^E}\sum_{\eta'\in \{0,1\}^E}\BBp(\sigma_{t+s}=\sigma^x, \eta_{t+s}=\eta'|\sigma_s=\sigma, \eta_s=\eta) \BBp( \eta_s=\eta|\sigma_s=\sigma).
\end{eqnarray}
Recall that $(\eta_t,\sigma_t)_{t\geq 0}$ is a  Markov jump process with transition rates 
$c((\eta,\sigma),(\eta',\sigma'))$, thus
\begin{eqnarray}\label{uee2}
\lim_{t\rightarrow 0}\frac{1}{t}\BBp(\sigma_{t+s}=\sigma^x, \eta_{t+s}=\eta'|\sigma_s=\sigma, \eta_s=\eta)
\,=\, c((\eta,\sigma),(\eta',\sigma^x))\,.
\end{eqnarray}
Using the expression of the 
rates 
$c((\eta,\sigma),(\eta',\sigma'))$ given in
definition \ref{def8}, together with formulas~(\ref{uee1}) and (\ref{uee2}), we obtain that,
for any $s\geq 0$,
	\begin{multline}\label{try}
 \lim_{t\rightarrow 0}\frac{1}{t}\BBp(\sigma_{t+s}=\sigma^x|\sigma_s=\sigma)
	\,=\,
	\sum_{\eta\in \{0,1\}^E}
	\sum_{e\in E_x}\left(\prod_{f\in E_x\setminus\{e\}}\BBone_{\eta(f)=0}\right)
	\cr
	\times
	\frac{1}{4}
	\big(
	(1-p)\BBone_{\eta(e)=1}
\BBone_{(\eta,\sigma)\in \cC}
	+ p
	\BBone_{\eta(e)=0}
\BBone_{(\eta^e,\sigma^x)\in \cC}
	\big)
	\BBp( \eta_s=\eta|\sigma_s=\sigma)
	\,.
\end{multline}
The measure $IP$ is, by proposition \ref{prorev1}, a reversible  and a stationary measure for the process $(\eta_t,\sigma_t)_{t\geq 0}$.
 Hence, if $(\eta_0,\sigma_0)$ is distributed as $IP$, then, for any $s\geq 0$,  $(\eta_s,\sigma_s)$ is also distributed as $IP$. Consequently, under the hypothesis of proposition \ref{umarg}, we have
\begin{equation}\label{fracte}
\BBp( \eta_s=\eta|\sigma_s=\sigma)=\frac{\BBp( \eta_s=\eta,\sigma_s=\sigma)}{\BBp(\sigma_s=\sigma)}=\frac{IP(\eta,\sigma)}
{\sum_{\eta'\in \{0,1\}^E}IP(\eta',\sigma)}\,.
\end{equation}
Equation~\eqref{transf} yields that
\begin{equation}\label{fructe}
{\sum_{\eta'\in \{0,1\}^E}IP(\eta',\sigma)} \,=\,
\frac{1}{Z} (1-p)^{|\{\,e\in E:\delta_\sigma(e)=0\,\}|}\,.
\end{equation}
Reporting~\eqref{fracte} and~\eqref{fructe} in \eqref{try},
we conclude that
\begin{multline}
	\label{atx}
\lim_{t\rightarrow 0}\frac{1}{t}\BBp(\sigma_{t+s}=\sigma^x|\sigma_s=\sigma)
\, = \,
{Z} (1-p)^{-|\{\,e\in E:\delta_\sigma(e)=0\,\}|}\times
	\cr
	\sum_{\eta\in \{0,1\}^E}
	\sum_{e\in E_x}\left(\prod_{f\in E_x\setminus\{e\}}\BBone_{\eta(f)=0}\right)
	\frac{1}{4}
	\big(
	(1-p)\BBone_{\eta(e)=1}
\BBone_{(\eta,\sigma)\in \cC}
	+ p
	\BBone_{\eta(e)=0}
\BBone_{(\eta^e,\sigma^x)\in \cC}
	\big)
	IP(\eta,\sigma)
\,.
\end{multline}
We compute these sums as follows.
We have, for $e\in E_x$,
$$\displaylines{
	\sum_{\eta\in \{0,1\}^E}
	\left(\prod_{f\in E_x\setminus\{e\}}\BBone_{\eta(f)=0}\right)
	\BBone_{\eta(e)=1}
\BBone_{(\eta,\sigma)\in \cC}
	IP(\eta,\sigma)
\,=\, \hfill\cr
\hfill
\frac{1}{Z} 
(1-p)^{|E_x|-1
+|\{\,e\in E\setminus E_x:\delta_\sigma(e)=0\,\}|}
	p\,\delta_\sigma(e)\,,\cr
	\sum_{\eta\in \{0,1\}^E}
	\left(\prod_{f\in E_x\setminus\{e\}}\BBone_{\eta(f)=0}\right)
	\BBone_{\eta(e)=0}
\BBone_{(\eta^e,\sigma^x)\in \cC}
	IP(\eta,\sigma)
\,=\,\hfill\cr
\hfill\frac{1}{Z} 
(1-p)^{|E_x|
+|\{\,e\in E\setminus E_x:\delta_\sigma(e)=0\,\}|}
	\big(1-\delta_\sigma(e)\big)\,.
	}$$
Reporting the values of these sums in formula~\eqref{atx}, we obtain
\begin{multline}
	\label{ayx}
\lim_{t\rightarrow 0}\frac{1}{t}\BBp(\sigma_{t+s}=\sigma^x|\sigma_s=\sigma)
\, = \,
	\frac{1}{4}
 (1-p)^{-|\{\,e\in E:\delta_\sigma(e)=0\,\}|}\times
	\cr
	\sum_{e\in E_x}
	\Big(
(1-p)^{|E_x|
+|\{\,e\in E\setminus E_x:\delta_\sigma(e)=0\,\}|}
	p\,\delta_\sigma(e)
	+
p(1-p)^{|E_x|
+|\{\,e\in E\setminus E_x:\delta_\sigma(e)=0\,\}|}
	\big(1-\delta_\sigma(e)\big)\Big)\cr
	\,=\,
	\frac{1}{4}
 (1-p)^{-|\{\,e\in E:\delta_\sigma(e)=0\,\}|}
	\sum_{e\in E_x}
p(1-p)^{|E_x|
+|\{\,e\in E\setminus E_x:\delta_\sigma(e)=0\,\}|}
\cr
	\,=\,
	\frac{1}{4}
	\sum_{e\in E_x}
 p(1-p)^{|\{\,e\in E_x:\delta_\sigma(e)=1\,\}|}
	\,=\,
	\frac{1}{4}
	|E_x|
 p(1-p)^{|\{\,e\in E_x:\delta_\sigma(e)=1\,\}|}
	\,.
\end{multline}
%
This yields the desired 
result~\eqref{ulimite}.
\end{proof}

\section*{Appendix}

\noindent Let $(V,E)$ be a finite graph and let $(X_t)_{t\geq 0}=((\eta_t,\sigma_t))_{t\geq 0}$ be a
time--homogeneous continuous--time Markov process  with  state space $\Omega=\{0,1\}^E\times \{-1,1\}^V$ and with infinitesimal generator ${{Q}}$ defined as follows. For
any function $f$ defined on $\Omega$ with values in $\mathbb R$, we have
$$
{{Q}}f(\eta,\sigma)=\sum_{(\eta',\sigma')\in \Omega} {{q}}((\eta,\sigma),(\eta',\sigma')) \left(f(\eta',\sigma')-f(\eta,\sigma)\right),
$$
where  ${{q}}((\eta,\sigma),(\eta',\sigma'))$ are
the transition rates defined by, for $(\eta,\sigma)\neq (\eta',\sigma')$,
$$
{{q}}((\eta,\sigma),(\eta',\sigma'))= \lim_{t\rightarrow 0^+}\frac{1}{t} \BBp\left(X_t=(\eta',\sigma') | X_0=(\eta,\sigma) \right))\,,
$$
and  for $(\eta,\sigma)=(\eta',\sigma')$,
\begin{equation}\label{E}
{{q}}((\eta,\sigma),(\eta,\sigma))= - \sum_{(\eta',\sigma'):\, (\eta,\sigma)\neq (\eta',\sigma') }{{q}}((\eta,\sigma),(\eta',\sigma')).
\end{equation}
Our goal
is to present some conditions on the transition rates
${{q}}((\eta,\sigma),(\eta',\sigma'))$
under which $(\sigma_t)_{t\geq 0}$ or $(\eta_t)_{t\geq 0}$ are Markov processes. This is a lumpability problem discussed in Ball and Yao (1993). We first recall the following definition of lumpability.
\begin{defi}
Let $\Omega$ be a countable set, $\{\Omega_1,\cdots,\Omega_r\}$ be a partition of $\Omega$ and $h$ the
	function from $\Omega$ to $\{1,\cdots,r\}$
defined by $h(x) = j$ if $x \in \Omega_j$.
	An homogeneous Markov chain $(X_t)_{t\geq 0}$ with state space $\Omega$ is
lumpable with respect to the partition ${\Omega_1,\cdots,\Omega_r}$ if $(h(X_t))_{t\geq 0}$ is an
	homogeneous Markov
chain for every initial distribution $y_0$ on $\{1,\cdots,r\}$ and its transition rates do not depend on the
choice of the initial distribution $y_0$.
\end{defi}
\noindent Let us take the following partition of $\Omega$:
$$
\Omega\,= \bigcup_{\sigma\in \{-1,1\}^V}
\Omega_{\sigma}
\,,\qquad
\Omega_{\sigma}\,=\,\big\{\,(\eta,\sigma)\in \Omega: \eta\in \{0,1\}^E\,\big\}\,.
$$
In this case, the function $h$ is the projection from $\Omega$ to $\{-1,1\}^V$ defined by $h((\eta,\sigma))=\sigma$.
Our purpose is to obtain necessary and sufficient
conditions under which the Markov process $((\eta_t,\sigma_t))_{t\geq 0}$ is
lumpable with respect to the partition $(\Omega_{\sigma})_{\sigma \in \{-1,1\}^V}$. \\
\\
It is well known (see for instance Theorem 3.1 in Ball and Yao (1993)\footnote{Ball and Yao's result is stated under their condition 2.2 which is satisfied in the actual context.
}) that a Markov process is lumpable with respect to the partition $(\Omega_j)_{1\leq j\leq r}$ if and only if there exist
positive real numbers $\lambda_{i,j}$, $1\leq i,j\leq r$, such that
$$
\forall i\neq j\,,\quad
\forall \, x\in \Omega_i\,,\qquad
\sum_{y\in \Omega_j}q_{x,y}= \lambda_{i,j}\,.
$$
In order to simplify the reading of this paper, we adapt this  criterion of lumpability to the case of
the partition $(\Omega_{\sigma})_{\sigma \in \{-1,1\}^V}$ and the  Markov process $((\eta_t,\sigma_t))_{t\geq 0}$. We summarize it in the following theorem.
\begin{theo}\label{theoball}
 Let $((\eta_t,\sigma_t))_{t\geq 0}$ be a continuous-time Markov process  with finite state space $\{0,1\}^E\times \{-1,1\}^V$ and transition rates
 ${{q}}((\eta,\sigma),(\eta',\sigma'))$. Then the following statements are equivalent:
 \begin{enumerate}
 \item[(i)] For any $\sigma\neq \sigma'$, there exist positive numbers $c(\sigma,\sigma')$ such that
 $$
		  \forall\, \eta\in \{0,1\}^E\,,\qquad
 \sum_{\eta'\in \{0,1\}^E}q((\eta,\sigma), (\eta',\sigma'))= c(\sigma,\sigma')\,.
 $$
 \item[(ii)] The spin marginal $(\sigma_t)_{t\geq 0}$ of the Markov process $((\eta_t,\sigma_t))_{t\geq 0}$ is a Markov process for every initial distribution $\sigma_0$ on $\{-1,1\}^V$ and its transition rates $c(\sigma,\sigma')$ do not depend on the choice of the initial distribution of $\sigma_0$.
 \end{enumerate}
\end{theo}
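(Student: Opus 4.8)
The plan is to deduce Theorem~\ref{theoball} directly from the general lumpability criterion recalled just above (Theorem~3.1 of Ball and Yao \cite{Ball}), specialized to the partition $(\Omega_\sigma)_{\sigma\in\{-1,1\}^V}$ of $\Omega=\{0,1\}^E\times\{-1,1\}^V$ and to the lumping map $h((\eta,\sigma))=\sigma$. The first step is the observation that, for this particular $h$, the lumped process $(h(X_t))_{t\geq 0}$ coincides with the spin marginal $(\sigma_t)_{t\geq 0}$ of $(X_t)_{t\geq 0}=((\eta_t,\sigma_t))_{t\geq 0}$. By the very definition of lumpability, $(X_t)_{t\geq 0}$ is lumpable with respect to $(\Omega_\sigma)_{\sigma}$ if and only if this spin marginal is a homogeneous Markov process for every initial law, with transition rates not depending on that law; unwinding the definition (an initial law on the spins being read as the push-forward of an initial law on $\Omega$), this is precisely statement~(ii).

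It then remains to translate the algebraic condition of the general criterion. Take $\Omega_i=\Omega_\sigma$ and $\Omega_j=\Omega_{\sigma'}$ with $\sigma\neq\sigma'$. A generic point $x\in\Omega_i$ is a pair $(\eta,\sigma)$ with $\eta$ ranging over $\{0,1\}^E$, and since $\Omega_{\sigma'}=\{(\eta',\sigma'):\eta'\in\{0,1\}^E\}$, the inner sum $\sum_{y\in\Omega_j}q_{x,y}$ is exactly $\sum_{\eta'\in\{0,1\}^E}q((\eta,\sigma),(\eta',\sigma'))$. Hence the condition that there exist numbers $\lambda_{i,j}$ with $\sum_{y\in\Omega_j}q_{x,y}=\lambda_{i,j}$ for all $x\in\Omega_i$ and all $i\neq j$ becomes, verbatim, condition~(i), upon setting $c(\sigma,\sigma')=\lambda_{\sigma,\sigma'}$. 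The equivalence $(i)\Longleftrightarrow(ii)$ is thus an immediate consequence of the general criterion. I would also point out that when the lumped rates exist they must equal this common value, so they are intrinsically determined by $q$; this explains why the clause ``do not depend on the choice of the initial distribution'' is part of~(ii).

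The only hypothesis that needs a word of justification is that the general criterion is applicable in our setting: Theorem~3.1 of Ball and Yao is stated under their condition~2.2, a mild regularity assumption on the Markov process. Because the state space $\{0,1\}^E\times\{-1,1\}^V$ is finite, the process is automatically conservative and non-explosive, so condition~2.2 holds, as already noted in the footnote attached to the criterion. I do not anticipate a genuine obstacle here: the whole argument is the correct matching of the partition and of the lumping map with the general statement, after which everything is a rewriting of indices; the single point requiring care, namely the verification of Ball and Yao's standing hypothesis, is settled by finiteness of the state space.
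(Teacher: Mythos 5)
Your proposal is correct and follows exactly the route the paper takes: the paper gives no independent proof but simply specializes Theorem~3.1 of Ball and Yeo to the partition $(\Omega_\sigma)_{\sigma\in\{-1,1\}^V}$, identifies the lumped process with the spin marginal, and notes in a footnote that their condition~2.2 holds in this finite-state setting. Your write-up merely makes this specialization explicit, so there is nothing to add.
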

\noindent The analog of Theorem  \ref{theoball} holds for the edge marginal process of $((\eta_t,\sigma_t))_{t\geq 0}$. For a proof, we refer to Theorem 3.1 in
Ball and Yeo \cite{Ball}. We also recall Theorem 3.2 of Ball and Yeo \cite{Ball}, that we adapt to our context.
\begin{theo}\label{theoball2}
Suppose that the transition rates $q((\eta,\sigma), (\eta',\sigma'))$ of the Markov process $(\eta_t,\sigma_t)_{t\geq 0}$ satisfy
$$
q((\eta,\sigma), (\eta',\sigma'))\neq 0
	\,\,\,\Longrightarrow\,\,\,
	(\eta,\sigma)=(\eta',\sigma')\,\,{\mbox{or}}\,\, (\eta=\eta'\,\,{\mbox{and}}\,\,\sigma\neq \sigma')
	\,\,{\mbox{or}}\,\, (\eta\neq \eta'\,{\mbox{and}}\,\,\sigma= \sigma').
$$
Then $(\eta_t)_{t\geq 0}$ and $(\sigma_t)_{t\geq 0}$ are both Markov processes if and only if they are mutually independent.
\end{theo}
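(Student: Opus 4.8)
The plan is to deduce both implications from the lumpability criterion of Theorem~\ref{theoball} and its edge analogue, using the ``one change at a time'' hypothesis on $q$ to collapse the sums appearing in that criterion into single terms. For the direct implication, I would suppose that $(\eta_t)_{t\geq 0}$ and $(\sigma_t)_{t\geq 0}$ are both Markov jump processes and fix $\sigma\neq\sigma'$ in $\{-1,1\}^V$. By the hypothesis on $q$, the only $\eta'$ for which $q((\eta,\sigma),(\eta',\sigma'))$ may be nonzero is $\eta'=\eta$, so $\sum_{\eta'\in\{0,1\}^E}q((\eta,\sigma),(\eta',\sigma'))=q((\eta,\sigma),(\eta,\sigma'))$; Theorem~\ref{theoball} then forces this number to be independent of $\eta$, and I call it $c(\sigma,\sigma')$. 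Symmetrically, the edge version of Theorem~\ref{theoball} gives that, for $\eta\neq\eta'$, the rate $q((\eta,\sigma),(\eta',\sigma))$ does not depend on $\sigma$; call it $\tilde c(\eta,\eta')$. Splitting the generator according to whether the spin coordinate or the edge coordinate moves, and using once more that no rate changes both, one finds that the generator of $(\eta_\cdot,\sigma_\cdot)$ takes the form
\[
Qf(\eta,\sigma)=\sum_{\sigma'\in\{-1,1\}^V}c(\sigma,\sigma')\big(f(\eta,\sigma')-f(\eta,\sigma)\big)+\sum_{\eta'\in\{0,1\}^E}\tilde c(\eta,\eta')\big(f(\eta',\sigma)-f(\eta,\sigma)\big),
\]
which is exactly the generator of two independent Markov chains, one on the spins with rates $c$ and one on the edges with rates $\tilde c$. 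Hence $e^{tQ}$ factorises as the tensor product of the two marginal semigroups, so that, started from a product initial law, the finite--dimensional distributions of $(\eta_\cdot,\sigma_\cdot)$ factorise and the two coordinate processes are mutually independent.

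For the converse, I would assume that $(\eta_t)_{t\geq 0}$ and $(\sigma_t)_{t\geq 0}$ are mutually independent. The natural filtration of the joint Markov process at time $t$ is the $\sigma$--field $\mathcal G_t$ generated by $(\eta_u,\sigma_u)$, $u\leq t$, which is the join of the two marginal filtrations; so, for fixed $s\geq 0$, the Markov property and time homogeneity of $(\eta_\cdot,\sigma_\cdot)$ provide a function $g_s$ with $\BBp(\sigma_{t+s}=\sigma'\mid\mathcal G_t)=g_s(\eta_t,\sigma_t)$. Writing $\mathcal H_t$ for the $\sigma$--field generated by $\sigma_u$, $u\leq t$, the process independence makes $\eta_t$ independent of $\mathcal H_t$ while $\sigma_t$ is $\mathcal H_t$--measurable, whence
\[
\BBp(\sigma_{t+s}=\sigma'\mid\mathcal H_t)=\mathbb E\big[g_s(\eta_t,\sigma_t)\mid\mathcal H_t\big]=\sum_{\eta\in\{0,1\}^E}g_s(\eta,\sigma_t)\,\BBp(\eta_t=\eta),
\]
a function of $\sigma_t$ alone. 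This is the Markov property of $(\sigma_t)_{t\geq 0}$, and the symmetric argument gives it for $(\eta_t)_{t\geq 0}$.

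I expect the genuinely new computation, the collapse of the lumpability sums, to be the easy part; the only delicate point will be the bookkeeping about the initial distribution, since in Theorem~\ref{theoball} the Markov property of the marginal is required for every initial law, so that ``mutually independent'' must be read relative to product initial conditions, which is automatic once the generator has been shown to split as a direct sum. Alternatively, one may simply invoke Theorem~$3.2$ of Ball and Yeo \cite{Ball} directly, after checking, as for Theorem~\ref{theoball}, that their standing condition~$2.2$ is satisfied here.
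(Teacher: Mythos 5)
Your proposal is correct in substance, but it is worth pointing out that the paper does not actually prove Theorem~\ref{theoball2}: it is quoted verbatim (after adaptation of notation) from Theorem~3.2 of Ball and Yeo \cite{Ball}, exactly as you suggest in your last sentence as the ``alternative'' route. So what you have written is a self-contained proof where the paper supplies only a citation. Your argument for the direct implication is the right one: the one--change hypothesis collapses the lumpability sums of Theorem~\ref{theoball} and of its edge analogue to the single terms $q((\eta,\sigma),(\eta,\sigma'))$ and $q((\eta,\sigma),(\eta',\sigma))$, so the generator splits as a commuting sum $L_\sigma+L_\eta$ of operators acting on separate coordinates, the semigroup factorises as $e^{tL_\sigma}e^{tL_\eta}$, and the finite--dimensional distributions factorise under a product initial law. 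This is precisely the mechanism behind Ball and Yeo's result, and it is the direction the paper actually uses (at the end of section~\ref{one}, to conclude that Markovian marginals would force the equilibrium measure to be a product measure). For the converse your tower--property computation does establish the Markov property of each marginal, but note that the conditional law $\sum_{\eta}g_s(\eta,\sigma_t)\,\BBp(\eta_t=\eta)$ depends on $t$ through the law of $\eta_t$, so a priori you only obtain a possibly time--inhomogeneous Markov process, whereas the notion of ``Markov marginal'' in Theorem~\ref{theoball} is homogeneous with rates independent of the initial distribution; this is the same bookkeeping issue about initial laws that you already flag, and it is resolved by reading ``mutually independent'' relative to product initial conditions, under which the factorised semigroup makes each marginal homogeneous with generator $L_\sigma$ (respectively $L_\eta$). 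With that reading your proof is complete and, unlike the paper, does not outsource the statement to \cite{Ball}.
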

\bibliographystyle{plain}
\bibliography{coupling}


\end{document}